\numberwithin{equation}{section}
\newtheorem{theorem}{Theorem}[section]
\newtheorem{lemma}[theorem]{Lemma}
\newtheorem{proposition}[theorem]{Proposition}
\newtheorem{corollary}[theorem]{Corollary}
\theoremstyle{definition}
\newtheorem{definition}[theorem]{Definition} 
\newtheorem{procedure}[theorem]{Procedure} 
\newtheorem{remark}[theorem]{Remark}
\newtheorem{example}[theorem]{Example}
\begin{document}

\title[Normally torsion-free edge ideals]
{Normally torsion-free edge ideals of weighted oriented graphs}
\thanks{The first author was supported by a scholarship from CONAHCYT,
Mexico. The second and third authors were supported by SNI, Mexico.}

\author[G. Grisalde]{Gonzalo Grisalde}
\address{
Departamento de
Matem\'aticas\\
Centro de Investigaci\'on y de Estudios
Avanzados del
IPN\\
Apartado Postal
14--740 \\
07000 Mexico City, Mexico
}
\email{gjgrisalde@math.cinvestav.mx}

\author[J. Mart\'inez-Bernal]{Jos\'e Mart\'inez-Bernal}
\address{
Departamento de
Matem\'aticas\\
Centro de Investigaci\'on y de Estudios
Avanzados del
IPN\\
Apartado Postal
14--740 \\
07000 Mexico City, Mexico
}
\email{jmb@math.cinvestav.mx}

\author[R. H. Villarreal]{Rafael H. Villarreal}
\address{
Departamento de
Matem\'aticas\\
Centro de Investigaci\'on y de Estudios
Avanzados del
IPN\\
Apartado Postal
14--740 \\
07000 Mexico City, Mexico
}
\email{vila@math.cinvestav.mx}

\keywords{Monomial ideals, symbolic powers, edge ideals, weighted
oriented graphs, normally torsion-free, irreducible decompositions.}
\subjclass[2020]{Primary 13C70; Secondary 13A70, 13F20, 05E40, 05C22, 05C25.} 

\begin{abstract} 
Let $I=I(D)$ be the edge ideal of a weighted oriented graph $D$, let $G$
be the underlying graph of $D$, and let
$I^{(n)}$ be the $n$-th symbolic power of $I$ defined using the
minimal primes of $I$. We prove that $I^2=I^{(2)}$ if and
only if the following conditions hold: (i) every vertex of $D$ with weight greater than $1$ is a sink
and (ii) $G$ has no triangles.  Using a result of Mandal and Pradhan and the
classification of normally torsion-free edge ideals of graphs, we
prove that $I^n=I^{(n)}$ for all $n\geq 1$ if and only if the
following conditions hold: (a) every vertex of $D$
with weight greater than $1$ is a sink and (b) $G$ is bipartite. If
$I$ has no embedded primes, conditions (a) and (b) classify when 
$I$ is normally torsion-free. Using polyhedral geometry and
integral closure, we give necessary conditions for the equality of 
ordinary and symbolic powers of monomial ideals with a minimal
irreducible decomposition. Then, we classify when 
the dual of the edge ideal of a weighted oriented graph is 
normally torsion-free. 
\end{abstract}

\maketitle 

\section{Introduction}\label{intro-section}
Let $G$ be a simple graph with vertex set $V(G)=\{t_1,\ldots,t_s\}$ and edge
set $E(G)$. Let $D$ be a {\it weighted oriented graph\/} whose {\it underlying
graph\/} is $G$, that is, $D$ is a triplet $(V(D),E(D),w)$ where $V(D)=V(G)$,
$E(D)\subset V(D)\times V(D)$ such that 
$$
E(G)=\{\{t_i,t_j\}\mid (t_i,t_j)\in
E(D)\},
$$ 
$|E(D)|=|E(G)|$, and  
$w\colon V(D) \to\mathbb{N}_+$ is a \textit{weight function}. Here $\mathbb{N}_+$ denotes the set 
of positive integers. In other words, $D$ is obtained
from $G$ by assigning a direction to its edges and a weight to its
vertices. A weighted oriented graph is a special type of
digraph \cite{digraphs}. The \textit{vertex set} of $D$ and the \textit{edge set} of $D$
are $V(D)$ and $E(D)$, respectively. The \emph{weight} of $t_i\in V(D)$
is $w(t_i)$ and is denoted simply by $w_i$. The set of vertices
$\{t_i\in V(D)\mid w_i>1\}$ is denoted by
$V^{+}(D)$. We can regard each vertex $t_i$ as a
 variable and consider the polynomial ring
 $S=K[t_1,\ldots,t_s]$ over a ground field $K$. The
 \textit{edge ideal} of $D$, introduced in \cite{cm-oriented-trees,WOG}, 
is the ideal of $S$ given by 
$$I(D):=(\{t_{i}t_{j}^{w_j}\mid (t_{i},t_{j})\in E(D)\}).$$
\quad If $w_i=1$ for each $t_i\in V(D)$, then $I(D)$ is the usual edge
ideal $I(G)$ of the graph $G$ \cite{cm-graphs}, that has been
extensively studied in the literature 
\cite{graphs-rings,Herzog-Hibi-book,edge-ideals,chapter-vantuyl,wolmer-survey,monalg-rev}. The
motivation to study $I(D)$ comes from coding theory, see
\cite[p.~536]{reyes-vila} and \cite[p.~1]{WOG}. If a vertex $t_i$ of
$D$ is a {\it source\/} (i.e., a vertex 
with only outgoing edges) we shall always 
assume that $w_i=1$ because in this case the definition of $I(D)$
 does not depend on the weight of $t_i$ (Remark~\ref{nov30-21}). A
 \emph{sink} vertex of $D$ is a vertex with only incoming edges. This
 notion will play a role in some of our main results. If all
 vertices of $V^+(D)$ are sinks, then $I(D)$ is obtained from $I(G)$ 
by making the change of variables $t_i\rightarrow t_i^{w_i}$ for
$i=1,\ldots,s$, and in this case some of the algebraic properties and
invariants of $I(G)$ are naturally related to those of $I(D)$ (see
\cite[Corollary~5]{cm-oriented-trees},
\cite[Corollary~4.7]{Mandal-Pradhan1}, \cite[Section~3]{lattice-dim1}). 

A prime ideal $\mathfrak{p}$ of $S$ is an \textit{associated prime}
of $I(D)$ if
$(I(D)\colon f)=\mathfrak{p}$, for some $f\in S$, where 
$(I(D)\colon f):=\{g\in S\mid gf\in I(D)\}$ is
an ideal quotient \cite[p.~8]{AM}. An associated prime of
$I(D)$ which properly contains another associated prime of $I(D)$ is
called an \textit{embedded prime} of $I(D)$. Let
$\mathfrak{p}_1,\ldots,{\mathfrak p}_r$ be   
the minimal primes of $I(D)$, that is, the non-embedded
associated primes of $I(D)$. Given an integer $n\geq 1$,  
the $n$-th {\it symbolic power} of 
$I(D)$, denoted $I(D)^{(n)}$, is the ideal  
$$
I(D)^{(n)}:=\bigcap_{i=1}^r Q_i=\bigcap_{i=1}^r
(I(D)^nS_{\mathfrak{p}_i}\textstyle\bigcap
S),
$$
where $Q_i=I(D)^nS_{\mathfrak{p}_i}\textstyle\cap
S$ is the ${\mathfrak p}_i$-primary component of
$I(D)^n$ and $S_{\mathfrak{p}_i}$ is the localization of $S$ at
$\mathfrak{p}_i$ (see \cite[p.~484]{simis-trung},
\cite[Definition~3.5.1]{Vas1}). In particular, $I(D)^{(1)}$ is the intersection of the non-embedded 
primary components of $I(D)$. An alternative
notion of symbolic 
power can 
be introduced 
using the set ${\rm Ass}(I(D))$ of associated
primes of $I(D)$ instead 
(see, e.g., \cite{cooper-etal,symbolic-powers-survey}): 
$$
I(D)^{\langle n\rangle}: =\bigcap_{\mathfrak{p}\in {\rm
Ass}(I(D))}(I(D)^nS_\mathfrak{p}{\textstyle\bigcap
S)}=\bigcap_{\mathfrak{p}\in {\rm maxAss}(I(D))}
(I(D)^nS_\mathfrak{p}\textstyle\bigcap
S),
$$
where ${\rm maxAss}(I(D))$ denotes the set of maximal elements 
of ${\rm Ass}(I(D))$ (maximal with respect to inclusion). 
Clearly $I(D)^n\subset I(D)^{\langle n\rangle}\subset I(D)^{(n)}$. 
If $I(D)$ has no embedded primes, the two definitions of symbolic powers
coincide. If all vertices of $V^+(D)$ are sinks, then $I(D)$ has no
embedded primes \cite[Lemma~47]{WOG}, $I(D)^{\langle
n\rangle}=I(D)^{(n)}$ for all $n\geq 1$, and $I(D)^n=I(D)^{(n)}$ 
if and only if $I(G)^n=I(G)^{(n)}$ for each $n\geq 1$
\cite[Corollary~4.7]{Mandal-Pradhan1}.   

One of the early works on symbolic powers of monomial ideals was
written by Simis \cite{aron-hoyos}. Giving a combinatorial
characterization of the equality of all ordinary and 
symbolic powers of a monomial ideal is a wide open problem in this
area. This problem has been solved for squarefree monomial ideals 
and for edge ideals of graphs using combinatorial optimization and
graph theory, see \cite[Corollary~3.14]{clutters}, \cite[Theorem
1.4]{hhtz}, and \cite[Theorem~5.9]{ITG}. 

We determine when $I(D)^2$ is equal to
$I(D)^{(2)}$ in terms of the cycles of $G$ and the sinks of $D$, and
give a combinatorial classification for the
equality ``$I(D)^n=I(D)^{(n)}$ for $n \geq 1$''. It is an open problem
to classify the equality ``$I(D)^n=I(D)^{\langle n\rangle}$ for $n
\geq 1$''; for some of the advances to solve this problem see
\cite{Banerjee-etal,Banerjee-Das-Selvaraja,Mandal-Pradhan,Mandal-Pradhan1} and the references therein.   

We come to one of our main results.

\noindent \textbf{Theorem~\ref{I2=I(2)}.}\textit{ 
Let $D$ be a weighted oriented graph and let $G$ be its underlying 
graph. Then, $I(D)^2=I(D)^{(2)}$ if and only if the following
two conditions hold:
\begin{enumerate}
\item[(i)] Every vertex of $V^+(D)$ is a sink;
\item[(ii)] $G$ has no triangles.
\end{enumerate}
}

If every vertex in $V^+(D)$ is a sink and $G$ is bipartite,
then $I(D)^{n}=I(D)^{\langle n\rangle}=I(D)^{(n)}$ for all $n\geq 1$ 
\cite[Corollary 3.8]{Mandal-Pradhan}. One of our main results shows that 
the converse holds.  

\noindent \textbf{Theorem~\ref{In=I(n)}.}\textit{  
Let $D$ be a weighted oriented graph and let $G$ be its underlying 
graph. Then, $I(D)^n=I(D)^{(n)}$ for all $n\geq 1$ if and only if the following
two conditions hold:
\begin{enumerate}
\item[{\rm (a)}] Every vertex in $V^+(D)$ is a sink;
\item[{\rm (b)}] $G$ is a bipartite graph.
\end{enumerate}
}

As a consequence, if $I(D)$ has no embedded primes,  then 
${\rm Ass}(I(D)^n)={\rm Ass}(I(D))$ for all $n\geq 1$ (i.e., $I(D)$ is
normally torsion-free) if and only if every vertex in $V^{+}(D)$ is
a sink and $G$ is a bipartite graph (Corollary~\ref{ntf-weighted}).

In Section~\ref{symbolic-section2}, using polyhedral geometry and
integral closure,  
we give necessary conditions for the equality of 
ordinary and symbolic powers of monomial ideals with a minimal
irreducible decomposition. 
To explain our result, we
introduce some more notation. 

An ideal $L$ of $S$ is called {\it irreducible} if 
$L$ cannot be written as an intersection of two ideals of $S$ that
properly contain $L$. Given $b=(b_1,\ldots,b_s)$ in
$\mathbb{N}^s\setminus\{0\}$, where $\mathbb{N}=\{0,1,\ldots\}$, we set
$\mathfrak{q}_b:=(\{t_i^{b_i}\vert\, b_i\geq 1\})$ and
$b^{-1}:=\sum_{b_i\geq 1}b_i^{-1}e_i$, where $e_i$ denotes the $i$-th unit
vector in $\mathbb{R}^s$. Let $I$ be a monomial ideal of
$S$. According to 
\cite[Theorems~6.1.16 and 6.1.17]{monalg-rev}, there is a 
\textit{unique irreducible decomposition}:
\begin{equation}\label{jun4-21}
I=\mathfrak{q}_{1}\textstyle\cap\cdots\cap\mathfrak{q}_{m},
\end{equation}
where each $\mathfrak{q}_{i}$ is an irreducible monomial ideal 
of the form $\mathfrak{q}_i=\mathfrak{q}_{\alpha_i}$ for some
$\alpha_i\in\mathbb{N}^s\setminus\{0\}$, and $I\neq\textstyle\bigcap_{i\neq j}\mathfrak{q}_{i}$ for
$j=1,\ldots,m$. The ideals $\mathfrak{q}_{1},\ldots,\mathfrak{q}_{m}$
are the {\it irreducible components\/} of $I$. The vectors
$\alpha_1^{-1},\ldots,\alpha_m^{-1}$ are used below to define the 
irreducible polyhedron of $I$.

Since irreducible ideals are
primary, the irreducible decomposition of $I$ is a 
primary decomposition of $I$.  The irreducible decomposition of $I$
is \textit{minimal} if ${\rm rad}(\mathfrak{q}_i)\neq{\rm
rad}(\mathfrak{q}_j)$ for $i\neq j$. For edge ideals of
weighted oriented graphs and for squarefree monomial ideals, their irreducible
decompositions are minimal \cite{WOG,monalg-rev} (cf. Theorem~\ref{pitones-reyes-toledo}).

The monomials of $S$ are denoted
by $t^a:=t_1^{a_1}\cdots t_s^{a_s}$, $a=(a_1,\dots,a_s)$ in
$\mathbb{N}^s$. We denote the minimal set of generators of $I$ by
$\mathcal{G}(I):=\{t^{v_1},\ldots,t^{v_q}\}$. The \textit{incidence matrix} of
the ideal $I$ is the $s\times q$ matrix $A$ with column vectors $v_1,\ldots,v_q$. The
$\textit{covering polyhedron}$ of $I$, denoted by
$\mathcal{Q}(I)$, is the rational polyhedron
$$
\mathcal{Q}(I):=\{x\vert\, x\geq 0;\,xA\geq 1\},
$$
where $1=(1,\ldots,1)$. The \textit{Newton
polyhedron} of $I$, denoted ${\rm NP}(I)$, is the 
integral polyhedron 
\begin{equation}\label{NP-def}
{\rm NP}(I):=\mathbb{R}_+^s+{\rm 
conv}(v_1,\ldots,v_q),
\end{equation}
where $\mathbb{R}_+=\{\lambda\in\mathbb{R}\vert\, \lambda\geq
0\}$. 
This polyhedron is the convex hull of the set of all $a\in\mathbb{N}^s$ such
that $t^a\in I$ \cite[p.~141]{Eisen}. The \textit{integral closure} of
 $I^n$ can be described as 
\begin{equation}\label{jun21-21}
\overline{I^n}=
(\{t^a\vert\, a/n\in{\rm NP}(I)\})
\end{equation}
for all $n\geq 1$ \cite[Proposition~3.5(a)]{reesclu}.  
If $I^n=\overline{I^n}$ for all $n\geq 1$, $I$ is said to be
\textit{normal}. Let $\alpha_1,\ldots,\alpha_m$ be the vectors in
$\mathbb{N}^s\setminus\{0\}$ 
associated to the irreducible decomposition of $I$ and let $B$ be the matrix with column vectors
$\alpha_1^{-1},\ldots,\alpha_m^{-1}$. The polyhedron
$$
\{x\vert\, x\geq 0;\,xB\geq 1\},
$$
is called the \textit{irreducible polyhedron} of $I$
 and is denoted by $\mathcal{Q}(B)$ or ${\rm IP}(I)$ \cite{Seceleanu-convex-bodies}.

We come to another of our results.

\noindent \textbf{Theorem~\ref{dec12-21}.}\textit{ Let $I$ be a monomial ideal of $S$
with a minimal irreducible decomposition 
$I=\mathfrak{q}_1\cap\cdots\cap\mathfrak{q}_m$. If $I^{n}=I^{(n)}$ for all
$n\geq 1$, then the following hold:
\begin{enumerate}
\item[(a)] $\overline{I^n}=
\overline{\mathfrak{q}_1^n}\cap\cdots\cap\overline{\mathfrak{q}_m^n}$
for all $n\geq 1$;
\item[(b)] ${\rm NP}(I)={\rm IP}(I)$;
\item[(c)] The vertices of $\mathcal{Q}(I)$ are precisely
$\alpha_1^{-1},\ldots,\alpha_m^{-1}$.
\end{enumerate}
}

As an application, we classify when 
the dual of the edge ideal $I(D)$ of a weighted oriented graph $D$ is 
normally torsion-free. Following
\cite[p.~495]{cm-oriented-trees}, define the 
\textit{dual} of $I(D)$, 
denoted $J(D)$, as the
intersection of all ideals $(t_i,t_j^{w_j})$ such
that $(t_i,t_j)\in E(D)$. Thus
$$
J(D)=\bigcap_{(t_i,t_j)\in E(D)}
(t_i,t_j^{w_j}),
$$
and this is the irreducible decomposition of $J(D)$. There are other
related ways, introduced by Ezra Miller
\cite{ezra-miller,cca}, to define the dual of a monomial ideal. If $w_i=1$ for
all $i$, then $J(D)$ is normally torsion-free if and
only if $G$ is bipartite \cite[Corollary~3.17, Theorem~4.6,
Proposition~4.27]{reesclu}. 

\noindent \textbf{Corollary~\ref{ntf-J}.}\textit{
Let $J(D)$ be the dual of
$I(D)$. Then, $J(D)^{(n)}=J(D)^n$ for all $n\geq 1$ if 
and only if $J(D)$ is normal and ${\rm NP}(J(D))={\rm IP}(J(D))$.
}

In Section~\ref{examples-section}, we present examples related to 
some of our results. Then, in Appendix~\ref{Appendix}, we give the
procedures for \textit{Normaliz}
\cite{normaliz2} and \textit{Macaulay}$2$ \cite{mac2} that are used in the examples 
to compute the symbolic powers of a monomial ideal and its irreducible
decomposition, the vertices of covering polyhedra, and the linear
constraints that define Newton polyhedra. 

For all unexplained
terminology and additional information,  we refer to \cite{AM} for primary
decompositions,  
\cite{digraphs} for the theory of digraphs,  \cite{diestel,Har} for
the theory of graphs,  
and \cite{graphs-rings,Herzog-Hibi-book,edge-ideals,monalg-rev} for the theory of
edge ideals of graphs and monomial ideals.

\section{Preliminaries}\label{prelim-section}
In this section we give some definitions and present some well-known
results that will be used in the following sections.  
To avoid repetitions, we continue to employ 
the notations and
definitions used in Section~\ref{intro-section}.

Let $D = (V(D),E(D),w)$ be a weighted oriented graph
with vertex set $V(D)=\{t_1,\ldots,t_s\}$, underlying graph $G$, and
edge ideal $I(D)$.

\begin{remark}\label{nov30-21} Consider the weighted
oriented graph $D' = (V(D),E(D),w')$ with $w'(t_i) = 1$ if $t_i$ is a
\textit{source} vertex and $w'(t_i) =
w(t_i)$ if $t_i$ is not a source vertex. 
Then, $I(D') = I(D)$, that is, $I(D)$ does not depend on the weights
that we place at source vertices. For this reason we will always 
assume that all sources of $D$ have weight $1$.
\end{remark}

\begin{lemma}\label{WOG-lemma-mandal}  If all vertices of $V^+(D)$ are
sinks, then the following hold:
\begin{enumerate}
\item[(a)] \cite[Lemma~47]{WOG} $I(D)$ has no embedded primes; 
\item[(b)] \cite[Corollary~4.7]{Mandal-Pradhan1} $I(G)^{(s)} = I(G)^s$ if
and only if 
$I(D)^{(s)} = I(D)^s$ for each $s\geq 1$.
\end{enumerate}
\end{lemma}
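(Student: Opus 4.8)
Both assertions are already recorded in the literature---part~(a) is \cite[Lemma~47]{WOG} and part~(b) is \cite[Corollary~4.7]{Mandal-Pradhan1}---so strictly speaking nothing new must be proved; I will instead sketch the common mechanism that I would use to establish them. The plan is to exploit the substitution $\sigma\colon S\to S$, $t_i\mapsto t_i^{w_i}$, already alluded to in Section~\ref{intro-section}. For a monomial ideal $J$ write $\sigma(J)$ for the ideal generated by $\{\sigma(t^a)\mid t^a\in J\}$; since $\sigma$ is multiplicative on monomials one has $\sigma(J_1J_2)=\sigma(J_1)\sigma(J_2)$, hence $\sigma(J^n)=\sigma(J)^n$. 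The key point is that, under the sink hypothesis, $I(D)=\sigma(I(G))$: if $(t_i,t_j)\in E(D)$ then $t_i$ has an outgoing edge, so $t_i$ is not a sink, whence $t_i\notin V^+(D)$ and $w_i=1$; therefore the generator $t_it_j^{w_j}$ equals $\sigma(t_it_j)$. Taking radicals gives ${\rm rad}(I(D))=I(G)$, so the minimal primes of $I(D)$ are exactly the primes $\mathfrak{p}_C=(\{t_i\mid t_i\in C\})$ with $C$ a minimal vertex cover of $G$.

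For part~(a) I would exhibit the irreducible decomposition explicitly. For each minimal vertex cover $C$ of $G$ set $\mathfrak{q}_C=(\{t_i\mid t_i\in C\setminus V^+(D)\}\cup\{t_i^{w_i}\mid t_i\in C\cap V^+(D)\})$. Because a weighted vertex lying in $C$ is a sink, a direct divisibility check shows $I(D)\subseteq\mathfrak{q}_C$, and one verifies $I(D)=\bigcap_C\mathfrak{q}_C$ as $C$ ranges over the minimal vertex covers of $G$ (this is the decomposition of \cite[Lemma~47]{WOG}). Each $\mathfrak{q}_C$ is irreducible with ${\rm rad}(\mathfrak{q}_C)=\mathfrak{p}_C$; since the $\mathfrak{p}_C$ are pairwise incomparable minimal primes, ${\rm Ass}(I(D))=\{\mathfrak{p}_C\}$ contains no embedded prime.

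For part~(b) I would show that $\sigma$ is compatible with both ordinary and symbolic powers. Multiplicativity gives $I(D)^n=\sigma(I(G))^n=\sigma(I(G)^n)$ immediately. For the symbolic powers, the bijection $C\mapsto\mathfrak{p}_C$ between minimal vertex covers and minimal primes of both $I(G)$ and $I(D)$, combined with the compatibility of $\sigma$ with localization at these primes, yields $I(D)^{(n)}=\sigma(I(G)^{(n)})$. Finally, $\sigma$ is injective on monomials (the exponent vector of $\sigma(t^a)$ determines $a$) and preserves minimality of generators, so it reflects equalities of monomial ideals; hence the two identities $I(D)^n=\sigma(I(G)^n)$ and $I(D)^{(n)}=\sigma(I(G)^{(n)})$ force $I(D)^n=I(D)^{(n)}\Longleftrightarrow I(G)^n=I(G)^{(n)}$ for each $n\geq 1$.

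The main obstacle is the symbolic-power identity $I(D)^{(n)}=\sigma(I(G)^{(n)})$: computing each primary component $I(D)^nS_{\mathfrak{p}_C}\cap S$ and matching it, after substitution, with the corresponding component for $I(G)$ requires tracking how the heavy powers $t_j^{w_j}$ behave when the variables outside $C$ are inverted. This is precisely the localization bookkeeping carried out in \cite{Mandal-Pradhan1}, and it is where the sink hypothesis is indispensable: it guarantees that the only high-degree variable in each generator sits at a vertex covered in the same way by $G$ and by $D$, so that inverting the non-cover variables produces no extra primary components and hence no new symbolic contributions.
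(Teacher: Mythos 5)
Your reading of the situation is the correct one: the paper offers no proof of Lemma~\ref{WOG-lemma-mandal} at all---both parts are quoted verbatim from the literature, (a) from \cite[Lemma~47]{WOG} and (b) from \cite[Corollary~4.7]{Mandal-Pradhan1}---so there is no in-paper argument for your sketch to match or diverge from.

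As for the mechanism you outline, it is essentially sound and is indeed the standard way these facts are proved. Two remarks. First, a small attribution slip: the explicit decomposition $I(D)=\bigcap_C\mathfrak{q}_C$ over minimal vertex covers is the specialization of \cite[Theorem~25]{WOG} (Theorem~\ref{pitones-reyes-toledo} in this paper) to the sink case, where one checks that the strong vertex covers are exactly the minimal ones because no vertex of $V^{+}(D)$ has an outgoing edge; \cite[Lemma~47]{WOG} is the resulting no-embedded-primes statement, not the decomposition itself. Second, the step you flag as ``the main obstacle''---the identity $I(D)^{(n)}=\sigma(I(G)^{(n)})$---does not require any localization bookkeeping if you use the paper's own Lemma~\ref{anoth-one-char-spow-general} (equivalently Lemma~\ref{nov1-21}): it gives $I(G)^{(n)}=\bigcap_C\mathfrak{p}_C^n$ and $I(D)^{(n)}=\bigcap_C\mathfrak{q}_C^n$ with $\mathfrak{q}_C=\sigma(\mathfrak{p}_C)$, and the substitution $\sigma$ commutes with products \emph{and} with intersections of monomial ideals (a monomial $t^b$ lies in $\sigma(J)$ precisely when $t^{\lfloor b_1/w_1\rfloor}\cdots t_s^{\lfloor b_s/w_s\rfloor}\in J$, and this membership test passes through intersections) and reflects equalities of monomial ideals. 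This closes part (b) in a few lines and makes fully transparent where the sink hypothesis enters, namely only in the identity $I(D)=\sigma(I(G))$ and in the matching of irreducible components.
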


Let $C$ be a vertex cover of $G$, i.e., a set of vertices of $G$ that
contain at least one vertex of each edge of $G$. A 
\textit{minimal vertex cover} of $G$ is a vertex cover which
is minimal with respect to inclusion.
Following \cite{WOG}, we 
consider the sets 
\begin{enumerate}
\item[\ ] $L_1(C)=\{x\in C\mid\mbox{there is }\, (x,y)\in E(D)\mbox{ with
}y\notin C\}$,
\item[\ ] $L_3(C)=\{x\in C\mid N_D(x)\subset C\}$, where $N_D(x):=N_G(x)$ is
the neighbor set of $x$, and 
\item[\ ] $L_2(C)=C\setminus(
L_1(C)\cup L_3(C))$.
\end{enumerate}
\quad Note that $\{L_i(C)\}_{i=1}^3$ is a partition of $C$. 
A vertex cover $C$ of $G$ is called a {\it strong
vertex cover} of $D$ if $C$ is a minimal vertex cover of
$G$ or else for
all $x\in L_3(C)$ there is $(y,x)\in E(D)$ such that $y\in
L_2(C)\cup L_3(C)$ with $w(y)\geq 2$. 

\begin{figure}[H]
\centering
 \begin{tikzpicture}[scale=0.6]
        \draw rectangle (15,10);
        \draw (7.5,5) ellipse [x radius=5cm, y radius=3cm];
                   \begin{scope}[decoration={markings, mark=at position 0.5 with {\arrow{>}}}] 
                 \draw[postaction={decorate},line width=0.2mm] (1.5,3) -- (4.5,4);
  \end{scope}
  \filldraw[blue] (4.5,4) circle (3pt);
 \draw[-> ,line width=0.2mm] (5.5,3) -- (3,1);
   \begin{scope}[decoration={markings, mark=at position 0.5 with {\arrow{>}}}] 
 \draw[postaction={decorate},line width=0.2mm] (5.6,1) -- (5.5,3);
  \end{scope}
 \filldraw[red] (5.5,3) circle (3pt);                           
  \draw[-> ,line width=0.2mm] (8.5,6) -- (10,6.5);
   \draw[-> ,line width=0.2mm] (8.5,6) -- (10,8.5);
    \begin{scope}[decoration={markings, mark=at position 0.5 with {\arrow{>}}}] 
  \draw[postaction={decorate},line width=0.2mm] (9,9)--(8.5,6);
    \draw[postaction={decorate},line width=0.2mm] (7.5,6.5) -- (8.5,6);
  \end{scope}
   \filldraw[red] (8.5,6) circle (3pt);       
          \draw[violet,dashed] (6,6) ellipse [x radius=1cm, y radius=1.5cm, rotate=10];   
          \draw[-> ,line width=0.2mm] (4,6) -- (6,7.3);
            \draw[-> ,line width=0.2mm] (4,6) -- (6.5,6);
              \begin{scope}[decoration={markings, mark=at position 0.5 with {\arrow{>}}}] 
                 \draw[postaction={decorate},line width=0.2mm] (5.5,6.5)--(4,6);
  \end{scope}
          \node at (6.1,5.2) {\tiny{$N_{D}(x)$}};
                \node at (3.7,6.2) {\small{$x$}};
                 \filldraw[green] (4,6) circle (3pt); 
\begin{scope}[decoration={markings, mark=at position 0.5 with {\arrow{>}}}] 
   \draw[postaction={decorate},line width=0.2mm] (10,1.5) -- (8,3);
     \draw[postaction={decorate},line width=0.2mm] (8.3,1.5) -- (8,3);
  \end{scope}
    \filldraw[blue] (8,3) circle (3pt);
      \draw[-> ,line width=0.2mm] (9.5,4) -- (7.5,4);
          \draw[-> ,line width=0.2mm] (9.5,4) -- (10,3);
\begin{scope}[decoration={markings, mark=at position 0.5 with {\arrow{>}}}] 
\draw[postaction={decorate},line width=0.2mm] (8.5,4.7) -- (9.5,4);
  \end{scope}
   \filldraw[green] (9.5,4) circle (3pt);
\draw[-> ,line width=0.2mm] (11,5) -- (11.5,4);
\begin{scope}[decoration={markings, mark=at position 0.5 with {\arrow{>}}}] 
\draw[postaction={decorate},line width=0.2mm] (14,4)--(11,5);
   \draw[postaction={decorate},line width=0.2mm] (11.5,6)--(11,5);
       \draw[postaction={decorate},line width=0.2mm] (13.3,5.5)--(11,5);
  \end{scope}
   \filldraw[blue] (11,5) circle (3pt);
                    \node at (3,5) {$C$};
                      \node at (1,.7) {$V(D)$};
                 \node at (14,9) {\textcolor{red}{$L_{1}(C)$}};
                  \node at (14,8) {\textcolor{blue}{$L_{2}(C)$}};
                   \node at (14,7) {\textcolor{green}{$L_{3}(C)$}};
\end{tikzpicture}
\caption{The partition $\{L_i(C)\}_{i=1}^3$ of
$C$.\quad\quad\quad\quad\quad\quad\quad\quad\quad}\label{partition-C} 
\end{figure}
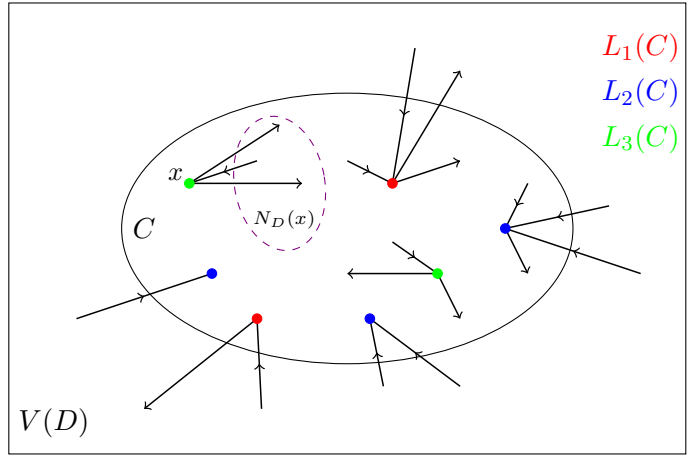
\vspace{-5mm}

\begin{theorem}{\rm\cite[Theorem~25]{WOG}}\label{pitones-reyes-toledo} If
$D$ is a weighted oriented  
graph and $\Upsilon(D)$ is the set of all strong vertex covers of 
$D$, then the irreducible decomposition of $I(D)$
is 
$$
I(D)=\bigcap_{C\in\Upsilon(D)}I_C, 
$$
where $I_C=(L_1(C)\cup\{t_i^{w_i}\vert\, t_i\in L_2(C)\cup L_3(C)\})$. 
\end{theorem}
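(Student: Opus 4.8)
The plan is to establish the asserted equality as a pair of inclusions, after observing that each $I_C$ is an \emph{irreducible} monomial ideal: since $L_1(C),L_2(C),L_3(C)$ partition $C$, the ideal $I_C$ is generated by pure powers of distinct variables, so it has the form $\mathfrak{q}_b$ and is irreducible. Consequently any equality $I(D)=\bigcap_C I_C$ is automatically an irreducible decomposition, and (all ideals in sight being monomial) every inclusion may be tested on monomials $t^a$.

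For the inclusion $I(D)\subseteq\bigcap_{C\in\Upsilon(D)}I_C$, I would prove the stronger statement that $I(D)\subseteq I_C$ for \emph{every} vertex cover $C$ of $G$. Let $t_it_j^{w_j}$ be a generator, so $(t_i,t_j)\in E(D)$ and $C$ covers $\{t_i,t_j\}$. A short case analysis finishes this: if $t_j\notin C$ then $t_i\in C$ and, by the definition of $L_1(C)$, $t_i\in L_1(C)$, so $t_i$ divides the generator; if $t_j\in L_1(C)$ then $t_j$ divides it; and if $t_j\in L_2(C)\cup L_3(C)$ then $t_j^{w_j}$ divides it. In every case the generator lies in $I_C$, so this direction is routine.

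The substance is the reverse inclusion, which I would prove in contrapositive form: if $t^a\notin I(D)$, then there is a \emph{strong} vertex cover $C$ with $t^a\notin I_C$. Set $A:=\{t_i\mid a_i\geq w_i\}$ and let $\mathcal{F}$ be the collection of vertex covers $C$ of $G$ with $t^a\notin I_C$. Since no generator of $I(D)$ divides $t^a$, no edge of $G$ lies entirely in $A$, so $C_0:=V(G)\setminus A$ is a vertex cover; for $t_i\in C_0$ we have $a_i<w_i$, and if some $t_i\in\mathrm{supp}(a)\cap C_0$ lay in $L_1(C_0)$ via $(t_i,t_j)\in E(D)$ with $t_j\notin C_0$ (that is, $a_j\geq w_j$), then $t_it_j^{w_j}$ would divide $t^a$, a contradiction; hence $C_0\in\mathcal{F}$ and $\mathcal{F}\neq\emptyset$. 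The same bookkeeping shows that every $C\in\mathcal{F}$ satisfies $C\cap A=\emptyset$, and that for such $C$ the condition $t^a\notin I_C$ is equivalent to $\mathrm{supp}(a)\cap L_1(C)=\emptyset$.

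The hard part, and the crux of the whole argument, is to locate a strong cover inside $\mathcal{F}$; the device I would use is to choose $C\in\mathcal{F}$ \emph{minimal with respect to inclusion} and show it is automatically strong. Fix $z\in L_3(C)$ (if none exists, strongness holds vacuously). Because $z\in L_3(C)$ means $N_G(z)\subseteq C$, the set $C\setminus\{z\}$ is again a vertex cover, properly smaller than $C$, so minimality gives $C\setminus\{z\}\notin\mathcal{F}$; by the equivalence just recorded this forces some $t_x\in\mathrm{supp}(a)\cap L_1(C\setminus\{z\})$. Now $t_x\in C$ with $a_x\geq 1$, while $\mathrm{supp}(a)\cap L_1(C)=\emptyset$, so $t_x\notin L_1(C)$; hence every out-neighbour of $t_x$ lies in $C$, and the out-neighbour witnessing $t_x\in L_1(C\setminus\{z\})$ must be the unique deleted vertex $z$, giving $(t_x,z)\in E(D)$. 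Finally $a_x\geq 1$ together with $t_x\in C$ (so $t_x\notin A$, i.e.\ $a_x<w_x$) forces $w_x\geq 2$, and $t_x\in C\setminus L_1(C)=L_2(C)\cup L_3(C)$. Thus every $z\in L_3(C)$ receives an edge from a vertex of $L_2(C)\cup L_3(C)$ of weight at least $2$, which is exactly the defining property of a strong vertex cover. Since $t^a\notin I_C$, this completes the reverse inclusion and the proof.
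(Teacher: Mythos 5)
The paper itself offers no proof of this statement: it is imported verbatim from Pitones--Reyes--Toledo as \cite[Theorem~25]{WOG}, so there is no internal argument to compare yours against. Judged on its own, your proof of the \emph{equality} $I(D)=\bigcap_{C\in\Upsilon(D)}I_C$ is correct and complete. The easy inclusion indeed holds for every vertex cover, and your treatment of the hard direction is sound: the cover $C_0=V(G)\setminus\{t_i\mid a_i\geq w_i\}$ does lie in $\mathcal{F}$, every $C\in\mathcal{F}$ is disjoint from $\{t_i\mid a_i\geq w_i\}$, and for such $C$ one has $t^a\notin I_C$ iff $\mathrm{supp}(a)\cap L_1(C)=\emptyset$; the identification of the out-neighbour witnessing $t_x\in L_1(C\setminus\{z\})$ with the deleted vertex $z$, and the deduction $w_x\geq 2$ from $1\leq a_x<w_x$, both check out. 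Taking an inclusion-minimal member of $\mathcal{F}$ and showing it is automatically strong is exactly the right device here.

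What you have not proved is the word ``the''. In this paper the irreducible decomposition \eqref{jun4-21} is by definition irredundant ($I\neq\bigcap_{i\neq j}\mathfrak{q}_i$ for every $j$), and the paper relies on that: Corollary~\ref{prt-main} reads off from Theorem~\ref{pitones-reyes-toledo} that \emph{every} $(C)$ with $C$ a strong cover is an associated prime of $I(D)$, which would fail for a redundant component. Your remark that ``any equality $I(D)=\bigcap_C I_C$ is automatically an irreducible decomposition'' only yields a decomposition into irreducible ideals; to get the canonical irredundant one you still owe, for each strong cover $C$, a monomial lying in $\bigcap_{C'\in\Upsilon(D),\,C'\neq C}I_{C'}$ but not in $I_C$, which is a separate and not entirely trivial combinatorial construction. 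So: equality fully established; minimality of the decomposition (and hence the associated-primes consequence the paper draws from it) not addressed.
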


\begin{corollary}{\rm\cite{WOG}}\label{prt-main} Let $D$ be a weighted 
oriented graph. Then, $\mathfrak{p}$ is an associated prime of
$I(D)$ if and only if $\mathfrak{p}=(C)$ for some strong
vertex cover $C$ of $D$.
\end{corollary}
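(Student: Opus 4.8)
The plan is to read off the associated primes of $I(D)$ directly from the irreducible decomposition provided by Theorem~\ref{pitones-reyes-toledo}, together with the first uniqueness theorem for primary decompositions \cite{AM}.

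First I would record the decomposition $I(D)=\bigcap_{C\in\Upsilon(D)}I_C$, with $I_C=(L_1(C)\cup\{t_i^{w_i}\mid t_i\in L_2(C)\cup L_3(C)\})$. Each $I_C$ is an irreducible monomial ideal, hence primary (as recalled in Section~\ref{intro-section}), so this is a primary decomposition of $I(D)$. Because $\{L_i(C)\}_{i=1}^3$ is a partition of $C$, passing to radicals yields ${\rm rad}(I_C)=(L_1(C)\cup L_2(C)\cup L_3(C))=(C)$; thus $I_C$ is $(C)$-primary with ${\rm rad}(I_C)=(C)$.

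Next I would check that this primary decomposition is minimal, that is, irredundant with pairwise distinct radicals. The radicals are distinct since distinct strong vertex covers are distinct vertex sets, whence $(C)\neq(C')$ for $C\neq C'$. Irredundancy is exactly the uniqueness of the irreducible decomposition recalled in Section~\ref{intro-section} (see \cite[Theorems~6.1.16 and 6.1.17]{monalg-rev}): the unique irreducible decomposition satisfies $I(D)\neq\bigcap_{C'\neq C}I_{C'}$ for every $C$, so no component $I_C$ may be dropped; moreover, this decomposition is minimal for edge ideals of weighted oriented graphs.

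Finally, by the first uniqueness theorem for primary decompositions, the associated primes attached to a minimal primary decomposition are precisely the radicals of its primary components. Hence ${\rm Ass}(I(D))=\{{\rm rad}(I_C)\mid C\in\Upsilon(D)\}=\{(C)\mid C\in\Upsilon(D)\}$, which is the asserted equivalence in both directions. The only delicate point is the irredundancy of the decomposition of Theorem~\ref{pitones-reyes-toledo}, since this is what guarantees that each strong vertex cover contributes a genuine, rather than a redundant, associated prime; as an alternative to invoking uniqueness, one could instead verify the reverse inclusion by hand, exhibiting for every strong vertex cover $C$ a monomial $f\in S$ with $(I(D)\colon f)=(C)$.
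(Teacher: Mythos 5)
Your proof is correct and is essentially the intended argument: the paper states Corollary~\ref{prt-main} without proof, quoting it from \cite{WOG} as the immediate consequence of Theorem~\ref{pitones-reyes-toledo}, whose components $I_C$ are primary with pairwise distinct radicals $(C)$, so that the first uniqueness theorem for minimal primary decompositions gives ${\rm Ass}(I(D))=\{(C)\mid C\in\Upsilon(D)\}$. Your care about irredundancy and the distinctness of the radicals is precisely the point that upgrades the irreducible decomposition to a minimal primary decomposition, and it is handled in the same way in \cite{WOG}.
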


\begin{proposition}{\rm(\cite[Lemma~3.1]{Banerjee-etal}, \cite[Theorem~25]{WOG}, \cite[Lemma
2.18]{Mandal-Pradhan1})}\label{dec5-21} Let $D$ be a weighted
oriented graph with vertex set $V(D)=\{t_1,\ldots,t_s\}$. The
following conditions are equivalent.
\begin{enumerate}
\item[(a)] $\mathfrak{m}=(t_1,\ldots,t_s)$ is an associated prime of
$I(D)$;
\item[(b)] $V(D)$ is a strong cover of $D$;
\item[(c)] $N_D^+(V^+(D)):=\{x\in V(D)\mid\mbox{there is }y\in V^{+}(D)\mbox{ such that
}(y,x)\in E(D)\}=V(D)$.
\end{enumerate}
\end{proposition}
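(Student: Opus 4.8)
The plan is to split the cyclic equivalence into (a)$\Leftrightarrow$(b), which follows immediately from the classification of associated primes already available, and (b)$\Leftrightarrow$(c), which I would obtain by explicitly evaluating the partition $\{L_i(C)\}_{i=1}^3$ for the single cover $C=V(D)$ and then unwinding the definition of a strong vertex cover. For (a)$\Leftrightarrow$(b): by Corollary~\ref{prt-main}, a prime $\mathfrak{p}$ is an associated prime of $I(D)$ if and only if $\mathfrak{p}=(C)$ for some strong vertex cover $C$ of $D$. Since the ideal $(C)$ is generated precisely by the variables $t_i$ with $t_i\in C$, one has $(C)=\mathfrak{m}=(t_1,\ldots,t_s)$ if and only if $C=V(D)$. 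Hence $\mathfrak{m}$ is an associated prime of $I(D)$ if and only if the full vertex set $V(D)$ is a strong vertex cover of $D$, which is exactly condition (b).

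For (b)$\Leftrightarrow$(c) I would compute the three parts of $C=V(D)$. Because no vertex lies outside $C=V(D)$, no edge $(x,y)$ can satisfy $y\notin C$, so $L_1(V(D))=\emptyset$; and because $N_G(x)\subseteq V(D)$ holds for every vertex $x$, every vertex satisfies the defining condition of $L_3$, giving $L_3(V(D))=V(D)$ and therefore $L_2(V(D))=\emptyset$. Next I would observe that $V(D)$ is never a minimal vertex cover (for nonempty $V(D)$), since deleting any single vertex $v$ still leaves a vertex cover: each edge has two distinct endpoints, so at most one of them equals $v$. Consequently the strong-cover condition for $V(D)$ reduces to its $L_3$-clause, namely that for every $x\in L_3(V(D))=V(D)$ there is $(y,x)\in E(D)$ with $y\in L_2(V(D))\cup L_3(V(D))=V(D)$ and $w(y)\geq 2$, i.e.\ $y\in V^+(D)$. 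Read over all $x$, this says exactly that every vertex is an out-neighbor of some vertex of weight at least $2$, that is $V(D)\subseteq N_D^+(V^+(D))$; since $N_D^+(V^+(D))\subseteq V(D)$ is automatic, this is precisely (c).

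The only delicate points, and the place I expect the argument to require care, are the correct evaluation of $\{L_i(V(D))\}$ (in particular confirming $L_3(V(D))=V(D)$, which forces $L_2(V(D))=\emptyset$) and the proper disposal of the alternative ``minimal vertex cover'' branch of the strong-cover definition. I would note that the equivalence is in fact robust to degenerate cases: if $G$ has an isolated vertex $x$, then $x\in L_3(V(D))$ but $x$ has no incoming edge, so (b) fails, and simultaneously $x\notin N_D^+(V^+(D))$, so (c) fails as well; thus (b) and (c) fail together and the equivalence is preserved. Under the standing assumption that $G$ has no isolated vertices these cases do not arise, the $L_3$-clause is the only operative condition, and conditions (a), (b), (c) line up exactly as claimed.
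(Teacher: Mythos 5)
Your proof is correct. Note that the paper itself contains no proof of Proposition~\ref{dec5-21}: it is quoted from \cite{Banerjee-etal,WOG} and \cite[Lemma~2.18]{Mandal-Pradhan1}, so there is no in-paper argument to compare against; your derivation is the standard one from those sources. The two pillars are sound: (a)$\Leftrightarrow$(b) is immediate from Corollary~\ref{prt-main} since $(C)=\mathfrak{m}$ forces $C=V(D)$, and your evaluation of the partition for $C=V(D)$ is right --- $L_1(V(D))=\emptyset$ because no vertex lies outside $C$, $L_3(V(D))=V(D)$ because $N_G(x)\subseteq V(D)$ always, hence $L_2(V(D))=\emptyset$ --- as is the disposal of the minimal-cover branch: since $G$ is simple, every edge retains an endpoint in $V(D)\setminus\{v\}$ for any $v$, so $V(D)$ is never a minimal vertex cover and only the $L_3$-clause is operative. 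That clause, read over all $x\in V(D)$ with $L_2\cup L_3=V(D)$ and $w(y)\geq 2$ meaning $y\in V^+(D)$, is literally $V(D)\subseteq N_D^+(V^+(D))$, and the reverse inclusion is automatic, giving (b)$\Leftrightarrow$(c). Your remark on isolated vertices is a worthwhile sanity check showing the equivalence needs no standing hypothesis on $G$.
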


The set $N_D^+(V^+(D))$ consists of all the outgoing neighbors of
$V^+(D)$. It would be interesting to find 
a structure theorem for oriented graphs that satisfy the equality
$N_D^+(V^+(D))=V(D)$.

\begin{lemma}\cite[p.~169]{Vas1}\label{icd}
If $I$ is a monomial ideal of $S$ and $n\in\mathbb{N}_+$, then 
\begin{equation*}
\overline{I^n}=(\{t^a\in S\mid (t^a)^{p}\in I^{pn}
\mbox{ for some }p\geq 1\}).
\end{equation*}
\end{lemma}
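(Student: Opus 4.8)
The plan is to reduce the statement to a purely combinatorial fact about the Newton polyhedron $\mathrm{NP}(I)$ and then invoke the description of $\overline{I^n}$ recorded in \eqref{jun21-21}. Write $G(I)=\{t^{v_1},\dots,t^{v_q}\}$, so that $\mathrm{NP}(I)=\mathbb{R}_+^s+\mathrm{conv}(v_1,\dots,v_q)$. Since \eqref{jun21-21} already exhibits $\overline{I^n}$ as a monomial ideal, both sides of the claimed equality are monomial ideals, and it suffices to show they contain the same monomials $t^a$. Moreover, the set $\{t^a\mid (t^a)^p\in I^{pn}\text{ for some }p\ge 1\}$ is closed under multiplication by arbitrary monomials (if $(t^a)^p\in I^{pn}$, then $(t^{a+c})^p=(t^a)^p(t^c)^p\in I^{pn}$ for every $c\in\mathbb{N}^s$), so the ideal it generates consists exactly of those monomials. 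Hence the whole statement is equivalent to the assertion
\[
a/n\in\mathrm{NP}(I)\iff (t^a)^p\in I^{pn}\text{ for some }p\ge 1,
\]
for each $a\in\mathbb{N}^s$, where by \eqref{jun21-21} the left-hand side is precisely the condition $t^a\in\overline{I^n}$.

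For the easy implication I would argue directly. If $(t^a)^p=t^{pa}\in I^{pn}$, then $t^{pa}$ is divisible by a product $t^{v_{i_1}}\cdots t^{v_{i_{pn}}}$ of $pn$ generators, i.e.\ $pa=\sum_{j=1}^{pn}v_{i_j}+w$ with $w\in\mathbb{N}^s$. Dividing by $pn$ gives
\[
a/n=\frac{1}{pn}\sum_{j=1}^{pn}v_{i_j}+\frac{w}{pn},
\]
where the first summand lies in $\mathrm{conv}(v_1,\dots,v_q)$ (an average of $pn$ of the $v_i$) and the second in $\mathbb{R}_+^s$; hence $a/n\in\mathrm{NP}(I)$. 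Equivalently, and without any reference to \eqref{jun21-21}, one observes that $t^a$ satisfies the monic integral equation $X^p-(t^a)^p=0$ with $(t^a)^p\in (I^n)^p=I^{pn}$, so $t^a\in\overline{I^n}$ by the very definition of integral closure; this already gives the inclusion of the right-hand side into $\overline{I^n}$.

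The converse is where the real content lies. Suppose $a/n\in\mathrm{NP}(I)$, so that
\[
a/n=\sum_{i=1}^q\lambda_i v_i+u,\qquad \lambda_i\ge 0,\ \sum_i\lambda_i=1,\ u\in\mathbb{R}_+^s.
\]
Because $a/n$ is rational and the data $v_1,\dots,v_q$ are integral, the set of feasible pairs $(\lambda,u)$ cut out by these constraints is a nonempty rational polyhedron, so I may take all $\lambda_i\in\mathbb{Q}$ and $u\in\mathbb{Q}^s$. The decisive step, and the only genuine obstacle, is the denominator-clearing: choose $p\in\mathbb{N}_+$ large enough that $pn\lambda_i\in\mathbb{N}$ for every $i$ and $pnu\in\mathbb{N}^s$. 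Then
\[
pa=\sum_{i=1}^q (pn\lambda_i)\,v_i+pnu,
\]
and since $\sum_i pn\lambda_i=pn$, the monomial $t^{\sum_i(pn\lambda_i)v_i}$ is a product of exactly $pn$ generators of $I$ and hence lies in $I^{pn}$; multiplying by the monomial $t^{pnu}$ keeps us inside $I^{pn}$. Therefore $(t^a)^p=t^{pa}\in I^{pn}$, which completes the equivalence and thus the proof. I expect the crux to be exactly this passage from a rational convex combination in $\mathrm{NP}(I)$ to an honest lattice-point membership $t^{pa}\in I^{pn}$, which the rationality of $a/n$ and of the vertices of $\mathrm{NP}(I)$ is what makes possible.
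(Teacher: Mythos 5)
Your proof is correct and follows essentially the same route as the paper, whose entire argument is the one-line observation that the lemma follows from the Newton-polyhedron description of $\overline{I^n}$ in Eq.~\eqref{jun21-21}. You have simply made explicit the details the paper leaves implicit (the reduction to monomials, the averaging argument for one inclusion, and the denominator-clearing for the other), and all of these steps check out.
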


\begin{proof} This follows from the description of the integral
closure given in Eq.~\eqref{jun21-21}.
\end{proof}

\begin{lemma}{\rm(\cite[Lemma~2]{cm-oriented-trees},
\cite[Lemma~3.1]{herzog-hibi-trung})} 
\label{anoth-one-char-spow-general} 
Let $I$ be a monomial ideal of $S$. If 
$\mathfrak{I}_1,\ldots,\mathfrak{I}_r$ are the primary components
corresponding to the  minimal primes 
of $I$, then 
$$
I^{(n)}={\mathfrak I}_1^{n}\textstyle\cap\cdots\cap {\mathfrak I}_r^{n}\ 
\mbox{ for all }\ n\geq 1.
$$
\end{lemma}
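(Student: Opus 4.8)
The plan is to prove the equality componentwise. Write $\mathfrak{p}_1,\ldots,\mathfrak{p}_r$ for the minimal primes of $I$ and let $\mathfrak{I}_i=IS_{\mathfrak{p}_i}\cap S$ be the corresponding primary component, which is the unique isolated $\mathfrak{p}_i$-primary component of $I$. I will show that $\mathfrak{I}_i^{\,n}=I^nS_{\mathfrak{p}_i}\cap S$ for every $i$ and every $n\geq 1$. Granting this, the definition $I^{(n)}=\bigcap_{i=1}^r(I^nS_{\mathfrak{p}_i}\cap S)$ recalled in Section~\ref{intro-section} immediately gives $I^{(n)}=\mathfrak{I}_1^{\,n}\cap\cdots\cap\mathfrak{I}_r^{\,n}$.

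First I would record two formal facts about localization. Since the extension of the contraction of an extended ideal recovers that ideal, $\mathfrak{I}_iS_{\mathfrak{p}_i}=(IS_{\mathfrak{p}_i}\cap S)S_{\mathfrak{p}_i}=IS_{\mathfrak{p}_i}$; and since localization commutes with powers, $\mathfrak{I}_i^{\,n}S_{\mathfrak{p}_i}=(\mathfrak{I}_iS_{\mathfrak{p}_i})^n=(IS_{\mathfrak{p}_i})^n=I^nS_{\mathfrak{p}_i}$. Contracting back to $S$ yields $\mathfrak{I}_i^{\,n}S_{\mathfrak{p}_i}\cap S=I^nS_{\mathfrak{p}_i}\cap S$, so it only remains to prove the \emph{saturation} identity $\mathfrak{I}_i^{\,n}S_{\mathfrak{p}_i}\cap S=\mathfrak{I}_i^{\,n}$.

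This identity is where the monomial structure is used. Write $\mathfrak{p}_i=(t_j\mid j\in A_i)$. I would first show that $\mathfrak{I}_i$ is generated by monomials supported on the variables $\{t_j\mid j\in A_i\}$ only: inverting the variables $t_k$ with $k\notin A_i$ turns each generator $t^a$ of $I$ into a unit multiple of its projection $t^{a'}$ onto the coordinates in $A_i$, so $IS_{\mathfrak{p}_i}=JS_{\mathfrak{p}_i}$ with $J=(t^{a'}\mid t^a\in G(I))$ supported on $A_i$. For any monomial ideal $J$ generated in the variables indexed by $A_i$ one checks directly that $JS_{\mathfrak{p}_i}\cap S=J$: a monomial lies in the left-hand side exactly when some product of it with a monomial in the variables outside $\mathfrak{p}_i$ is divisible by a generator of $J$, and since those generators involve no variable outside $A_i$, such a divisibility already holds for the monomial itself. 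Hence $\mathfrak{I}_i=IS_{\mathfrak{p}_i}\cap S=J$ is supported on $A_i$, and therefore so is every power $\mathfrak{I}_i^{\,n}$. Applying the same saturation computation to $\mathfrak{I}_i^{\,n}$ gives $\mathfrak{I}_i^{\,n}S_{\mathfrak{p}_i}\cap S=\mathfrak{I}_i^{\,n}$, which completes the step.

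I expect the main obstacle to be precisely this last paragraph: verifying carefully that passing to a minimal prime strips each generator down to its part in the variables of $\mathfrak{p}_i$, and that the resulting monomial ideal is unchanged by localizing and contracting at $\mathfrak{p}_i$. Everything else is formal manipulation of extensions and contractions. The conceptual content is that minimality of $\mathfrak{p}_i$ — which is what makes $\mathfrak{I}_i$ the uniquely determined isolated component $IS_{\mathfrak{p}_i}\cap S$ — is exactly what forces $\mathfrak{I}_i^{\,n}$ to be saturated with respect to the variables off $\mathfrak{p}_i$, and this is the property the symbolic power computation requires.
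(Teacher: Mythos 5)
Your argument is correct. Note that the paper does not prove this lemma at all --- it is quoted from \cite[Lemma~2]{cm-oriented-trees} and \cite[Lemma~3.1]{herzog-hibi-trung} --- so there is no in-text proof to compare against; but your route is the standard one, and its key step, the identity $\mathfrak{I}_i^{\,n}=I^nS_{\mathfrak{p}_i}\cap S$, is exactly the content of Proposition~\ref{nov6-21} (\cite[Proposition~3.6]{cooper-etal}) specialized to minimal primes, after which the claim is immediate from the definition of $I^{(n)}$. Your reduction of that identity to the saturation statement $JS_{\mathfrak{p}_i}\cap S=J$ for a monomial ideal $J$ supported on the variables of $\mathfrak{p}_i$ is sound; the only point you gloss over is why membership in $JS_{\mathfrak{p}_i}\cap S$ can be tested with \emph{monomial} multipliers $g\notin\mathfrak{p}_i$ rather than arbitrary ones (a term of $g$ could a priori be needed only after cancellation). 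This is standard --- for instance one can write $J=J_0S$ with $J_0\subset K[t_j\mid j\in A_i]$, extend an irredundant primary decomposition of $J_0$ along the polynomial extension, and observe that every associated prime of $J$ is then contained in $\mathfrak{p}_i$, so no component is lost on localizing --- but it deserves a sentence. With that caveat the proof is complete and uses only the minimality of $\mathfrak{p}_i$ where it should, namely to identify $\mathfrak{I}_i$ with the uniquely determined isolated component $IS_{\mathfrak{p}_i}\cap S$.
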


\begin{lemma}\label{nov1-21}
If $\mathfrak{p}_1,\ldots,\mathfrak{p}_r$ are the minimal
primes of $I(D)$, then for each $\mathfrak{p}_i$ there is only one 
irreducible component $\mathfrak{q}_i$ of $I(D)$ with ${\rm
rad}(\mathfrak{q}_i)=\mathfrak{p}_i$, and $I(D)^{(n)}=\bigcap_{i=1}^r\mathfrak{q}_i^n$ for all $n\geq 1$. 
\end{lemma}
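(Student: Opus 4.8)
The plan is to prove Lemma~\ref{nov1-21} in two stages, first establishing the bijection between minimal primes and irreducible components with matching radical, and then deducing the symbolic power formula from Lemma~\ref{anoth-one-char-spow-general}. To set up, recall from Theorem~\ref{pitones-reyes-toledo} that the irreducible decomposition $I(D)=\bigcap_{C\in\Upsilon(D)}I_C$ runs over all strong vertex covers $C$, with $I_C=(L_1(C)\cup\{t_i^{w_i}\mid t_i\in L_2(C)\cup L_3(C)\})$, and that by Corollary~\ref{prt-main} the associated primes of $I(D)$ are exactly the ideals $(C)=\operatorname{rad}(I_C)$ for $C\in\Upsilon(D)$. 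Note that this irreducible decomposition is \emph{minimal} (as stated in the introduction for edge ideals of weighted oriented graphs), meaning $\operatorname{rad}(I_C)\neq\operatorname{rad}(I_{C'})$ for distinct components; equivalently, distinct strong covers give distinct radicals $(C)$.

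First I would fix a minimal prime $\mathfrak{p}_i$ of $I(D)$. By Corollary~\ref{prt-main}, $\mathfrak{p}_i=(C_i)$ for some strong vertex cover $C_i$, and the corresponding irreducible component is $\mathfrak{q}_i:=I_{C_i}$, which satisfies $\operatorname{rad}(\mathfrak{q}_i)=\operatorname{rad}(I_{C_i})=(C_i)=\mathfrak{p}_i$. The key point is \emph{uniqueness}: if $\mathfrak{q}'=I_{C'}$ were another irreducible component with $\operatorname{rad}(\mathfrak{q}')=\mathfrak{p}_i$, then $(C')=(C_i)$, so $C'=C_i$ as vertex sets, whence $\mathfrak{q}'=\mathfrak{q}_i$. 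This is precisely the minimality of the irreducible decomposition, so there is exactly one irreducible component lying over each $\mathfrak{p}_i$. This proves the first assertion.

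Next I would identify the primary component of $I(D)$ at $\mathfrak{p}_i$ with this irreducible component $\mathfrak{q}_i$. Since $\mathfrak{q}_i$ is an irreducible monomial ideal it is $\mathfrak{p}_i$-primary, and in the irreducible decomposition the intersection $\bigcap_{C\in\Upsilon(D)}I_C$ is a primary decomposition in which, by minimality, each radical appears at most once. Because $\mathfrak{p}_i$ is a \emph{minimal} prime, the $\mathfrak{p}_i$-primary component is uniquely determined and equals $\mathfrak{q}_i=I(D)S_{\mathfrak{p}_i}\cap S$; indeed localizing at $\mathfrak{p}_i$ kills every component $I_C$ with $(C)\not\subset\mathfrak{p}_i$ and leaves only $\mathfrak{q}_i$ (the other minimal primes are incomparable to $\mathfrak{p}_i$, and any embedded primes strictly contain some minimal prime, hence are not contained in $\mathfrak{p}_i$). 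Thus $\mathfrak{q}_i=\mathfrak{I}_i$ in the notation of Lemma~\ref{anoth-one-char-spow-general}.

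Finally I would invoke Lemma~\ref{anoth-one-char-spow-general} directly: taking $\mathfrak{I}_1,\ldots,\mathfrak{I}_r$ to be the primary components at the minimal primes, which we have just identified with $\mathfrak{q}_1,\ldots,\mathfrak{q}_r$, we get
$$
I(D)^{(n)}=\mathfrak{I}_1^n\cap\cdots\cap\mathfrak{I}_r^n=\mathfrak{q}_1^n\cap\cdots\cap\mathfrak{q}_r^n
\quad\text{for all }n\geq 1,
$$
as claimed. The main obstacle I anticipate is the bookkeeping around uniqueness of the primary component at a minimal prime and carefully justifying that localization at $\mathfrak{p}_i$ isolates exactly $\mathfrak{q}_i$ among all the components $I_C$; this rests squarely on the minimality of the irreducible decomposition of $I(D)$, which is the fact that must be cited cleanly to avoid a gap.
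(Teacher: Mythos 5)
Your proof is correct and follows essentially the same route as the paper: Theorem~\ref{pitones-reyes-toledo} (with the minimality of the irreducible decomposition) gives the unique irreducible component over each minimal prime, and Lemma~\ref{anoth-one-char-spow-general} then yields the formula for $I(D)^{(n)}$. The only difference is that you spell out, via localization, why the irreducible component $\mathfrak{q}_i$ coincides with the $\mathfrak{p}_i$-primary component --- a step the paper leaves implicit.
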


\begin{proof} By Theorem~\ref{pitones-reyes-toledo}, 
for each $1\leq i\leq r$ there is a unique
irreducible component $\mathfrak{q}_i$ of $I(D)$ whose radical is
$\mathfrak{p}_i$. Hence, by Lemma~\ref{anoth-one-char-spow-general}, one has
$I(D)^{(n)}=\bigcap_{i=1}^r\mathfrak{q}_i^n$ for all $n\geq 1$. 
\end{proof}

Recall that the unique minimal set of generators of a monomial ideal $I$, 
consisting of monomials, is denoted by $\mathcal{G}(I)$. 

\begin{lemma}\cite[Lemma~1]{cm-oriented-trees}\label{duality-of-exponents} 
Let $I\subset S$ be a monomial ideal, with 
$\mathcal{G}(I)=\{t^{v_1},\ldots,t^{v_q}\}$ and
$v_i=(v_{i,1},\ldots,v_{i,s})$ for $i=1,\ldots,q$, and let
$I=\bigcap_{i=1}^m\mathfrak{q}_i$ be its irreducible decomposition. Then
$$
\{t_j^{v_{i,j}}\vert\, v_{i,j}\geq
1\}=\mathcal{G}(\mathfrak{q}_1)\textstyle\cup\cdots\cup\mathcal{G}(\mathfrak{q}_m).
$$ 
\end{lemma}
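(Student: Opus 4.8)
The plan is to reduce the asserted set equality to a statement about one variable at a time, and then to prove the two resulting inclusions separately, using respectively the minimality of the generators $t^{v_i}$ and the irredundancy of the decomposition $I=\bigcap_{l=1}^m I_l$. Here I write the irreducible components as $I_l=\mathfrak q_{\alpha_l}$ with $\alpha_l=(\alpha_{l,1},\dots,\alpha_{l,s})$, so that $G(I_l)=\{\,t_j^{\alpha_{l,j}}\mid \alpha_{l,j}\geq 1\,\}$. Both sides of the claimed identity consist of pure powers of single variables, and a pure power $t_j^{a}$ determines the pair $(j,a)$ uniquely; hence it suffices to fix a variable $t_j$ and prove that the set $E_j:=\{\,v_{i,j}\mid 1\leq i\leq q,\ v_{i,j}\geq 1\,\}$ of positive exponents of $t_j$ occurring among the minimal generators coincides with the set $F_j:=\{\,\alpha_{l,j}\mid 1\leq l\leq m,\ \alpha_{l,j}\geq 1\,\}$ of positive exponents of $t_j$ occurring among the components. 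Throughout I will use the two elementary membership criteria: $t^c\in I$ if and only if some $t^{v_i}$ divides $t^c$, and $t^c\in\mathfrak q_{\alpha_l}$ if and only if $c_{j'}\geq \alpha_{l,j'}$ for some $j'$ in $\mathrm{supp}(\alpha_l)=\{\,j'\mid \alpha_{l,j'}\geq 1\,\}$.

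For the inclusion $E_j\subseteq F_j$, I would take $a=v_{i,j}\geq 1$ for some minimal generator $t^{v_i}$ and let $e_j$ denote the $j$-th unit vector. Minimality of $t^{v_i}$ gives $t^{v_i-e_j}\notin I$, so $t^{v_i-e_j}\notin \mathfrak q_{\alpha_l}$ for some $l$, while $t^{v_i}\in I\subseteq\mathfrak q_{\alpha_l}$. I then compare the two membership conditions: since $v_i$ and $v_i-e_j$ differ only in the $j$-th coordinate, the condition $t^{v_i-e_j}\notin\mathfrak q_{\alpha_l}$ forces $v_{i,j'}<\alpha_{l,j'}$ for every $j'\neq j$ in $\mathrm{supp}(\alpha_l)$; hence the coordinate witnessing $t^{v_i}\in\mathfrak q_{\alpha_l}$ must be $j$ itself, which shows $j\in\mathrm{supp}(\alpha_l)$ and $\alpha_{l,j}\leq v_{i,j}=a$. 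On the other hand the $j$-th coordinate of the condition $t^{v_i-e_j}\notin\mathfrak q_{\alpha_l}$ reads $a-1<\alpha_{l,j}$, so $\alpha_{l,j}\geq a$. Therefore $\alpha_{l,j}=a$ and $a\in F_j$.

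For the reverse inclusion $F_j\subseteq E_j$, I would take $a=\alpha_{l,j}\geq 1$. By irredundancy of the decomposition there is a monomial $t^c$ lying in every $\mathfrak q_{\alpha_{l'}}$ with $l'\neq l$ but not in $\mathfrak q_{\alpha_l}$; replacing $c$ by the vector $d$ with $d_{j'}=\alpha_{l,j'}-1$ for $j'\in\mathrm{supp}(\alpha_l)$ and $d_{j'}=c_{j'}$ otherwise (so that $d\geq c$), one still has $t^d\in\mathfrak q_{\alpha_{l'}}$ for all $l'\neq l$ and $t^d\notin\mathfrak q_{\alpha_l}$. The purpose of this normalization is that now $t^{d+e_j}\in\mathfrak q_{\alpha_l}$ as well, since its $j$-th coordinate equals $\alpha_{l,j}$; hence $t^{d+e_j}\in\bigcap_{l'}\mathfrak q_{\alpha_{l'}}=I$, while $t^d\notin I$. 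Choosing a minimal generator $t^{v_i}$ dividing $t^{d+e_j}$, one checks that $v_{i,j}<a$ would force $v_i\leq d$ and hence $t^d\in I$, a contradiction; combined with $v_{i,j}\leq (d+e_j)_j=a$ this yields $v_{i,j}=a$, so $a\in E_j$. Assembling the two inclusions over all $j$ gives the lemma. I expect the delicate point to be this last direction: one must pass from an arbitrary separating monomial to one sitting exactly at the corner of $\mathfrak q_{\alpha_l}$ (the normalization $c\mapsto d$) and correctly handle the coordinates outside $\mathrm{supp}(\alpha_l)$, where $\mathfrak q_{\alpha_l}$ imposes no constraint.
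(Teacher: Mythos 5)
The paper does not prove this lemma; it is quoted from \cite[Lemma~1]{cm-oriented-trees}, so there is no in-paper proof to compare against. Your argument is correct and self-contained. The reduction to a fixed variable $t_j$ is legitimate because a pure power $t_j^a$ with $a\geq 1$ determines the pair $(j,a)$, and both membership criteria you invoke are the standard ones for monomial and irreducible monomial ideals. In the direction $E_j\subseteq F_j$, the key observation that $t^{v_i-e_j}\notin I$ (by minimality of the generator) singles out a component $\mathfrak q_{\alpha_l}$ for which the only possible witness coordinate of $t^{v_i}\in\mathfrak q_{\alpha_l}$ is $j$, and the two inequalities $\alpha_{l,j}\leq v_{i,j}$ and $v_{i,j}-1<\alpha_{l,j}$ correctly pin down $\alpha_{l,j}=v_{i,j}$. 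In the direction $F_j\subseteq E_j$, the normalization $c\mapsto d$ is valid: $t^c\notin\mathfrak q_{\alpha_l}$ forces $c_{j'}\leq\alpha_{l,j'}-1$ on $\mathrm{supp}(\alpha_l)$, so $d\geq c$ and $t^d$ remains in every other component while still avoiding $\mathfrak q_{\alpha_l}$; then $t^{d+e_j}\in I$, $t^d\notin I$, and any minimal generator dividing $t^{d+e_j}$ must have $j$-th exponent exactly $\alpha_{l,j}$. This is essentially the standard duality argument between minimal generators and irreducible components (and, as far as one can tell, the same idea as in the cited source); I see no gap.
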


\begin{definition}\rm An ideal $I$ of $S$ is 
{\em normally torsion-free}
 if ${\rm Ass}(I^n)={\rm Ass}(I)$ for all 
$n\geq 1$. 
\end{definition}

\begin{proposition}\cite[Proposition
4.3.29]{monalg-rev}\label{ntf-char} 
Let $I$ be an ideal of $S$. If $I$ has no embedded 
 primes, then $I$ is normally torsion-free if and only 
if $I^n=I^{(n)}$ for all $n\geq 1$.
\end{proposition}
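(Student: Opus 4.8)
The plan is to reduce the statement to a comparison, for each fixed $n$, of the set ${\rm Ass}(I^n)$ of associated primes of $I^n$ with the set ${\rm Min}(I)$ of minimal primes of $I$, and then to exploit the uniqueness of the primary components attached to minimal primes. Recall from the definition that $I^{(n)}=\bigcap_{i=1}^r(I^n S_{\mathfrak{p}_i}\cap S)$, where $\mathfrak{p}_1,\ldots,\mathfrak{p}_r$ are the minimal primes of $I$, and that each $Q_i=I^n S_{\mathfrak{p}_i}\cap S$ is the unique $\mathfrak{p}_i$-primary component of $I^n$. Since $I$ and $I^n$ have the same radical, ${\rm Min}(I^n)={\rm Min}(I)=\{\mathfrak{p}_1,\ldots,\mathfrak{p}_r\}$, so $I^{(n)}$ is precisely the intersection of the primary components of $I^n$ corresponding to its minimal primes, and the inclusion $I^n\subseteq I^{(n)}$ holds for every $n$.

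First I would isolate the key lemma: for a fixed $n\geq 1$, one has $I^n=I^{(n)}$ if and only if ${\rm Ass}(I^n)={\rm Min}(I)$, that is, $I^n$ has no embedded primes. For the ``if'' direction, if $I^n$ has no embedded primes then its irredundant primary decomposition involves only the minimal primes $\mathfrak{p}_1,\ldots,\mathfrak{p}_r$, and for those the primary component is uniquely determined and equals $Q_i=I^n S_{\mathfrak{p}_i}\cap S$; hence $I^n=\bigcap_{i=1}^r Q_i=I^{(n)}$. For the ``only if'' direction, if $I^n=I^{(n)}=\bigcap_{i=1}^r Q_i$, then this exhibits $I^n$ as an intersection of primary ideals all of whose radicals lie in ${\rm Min}(I)$; after discarding redundant terms one still has an irredundant primary decomposition supported on ${\rm Min}(I)$, so by the uniqueness of the set of associated primes one gets ${\rm Ass}(I^n)\subseteq{\rm Min}(I)$, and the reverse inclusion is automatic.

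With the lemma in hand the equivalence follows formally. The hypothesis that $I$ has no embedded primes says exactly that ${\rm Ass}(I)={\rm Min}(I)$. If $I$ is normally torsion-free, then ${\rm Ass}(I^n)={\rm Ass}(I)={\rm Min}(I)$ for every $n$, so the lemma gives $I^n=I^{(n)}$ for all $n\geq 1$. Conversely, if $I^n=I^{(n)}$ for all $n$, then the lemma yields ${\rm Ass}(I^n)={\rm Min}(I)={\rm Ass}(I)$ for every $n$, which is the definition of normally torsion-free.

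The main obstacle I expect is the ``only if'' direction of the lemma: one must argue carefully that the equality $I^n=\bigcap_{i=1}^r Q_i$ genuinely forces ${\rm Ass}(I^n)$ to contain no embedded prime. This is where the uniqueness of the set of associated primes (independent of the chosen primary decomposition) is essential, together with the observation that thinning a primary decomposition supported only on minimal primes down to an irredundant one cannot introduce a new, embedded prime. Everything else is a direct unwinding of the definitions of symbolic power and of normal torsion-freeness.
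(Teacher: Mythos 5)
Your argument is correct, and it is essentially the standard one: the paper gives no proof of this proposition, citing it to \cite[Proposition~4.3.29]{monalg-rev}, where the argument likewise reduces to the observation that $I^{(n)}$ is the intersection of the (unique) primary components of $I^n$ at the minimal primes, so $I^n=I^{(n)}$ holds exactly when ${\rm Ass}(I^n)={\rm Min}(I)$. Your key lemma and its use of the uniqueness theorems for primary decomposition match that route, so there is nothing to add.
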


\begin{corollary}\label{ntf-char-coro} 
Let $I$ be an ideal of $S$. Then, $I^n=I^{(n)}$ for all $n\geq 1$ if and only 
if $I$ has no embedded primes and ${\rm Ass}(I^n)={\rm Ass}(I)$ for all $n\geq 1$.
\end{corollary}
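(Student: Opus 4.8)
The plan is to prove Corollary~\ref{ntf-char-coro} by assembling the two prior results, Proposition~\ref{ntf-char} and the definition of symbolic powers, into a single biconditional. The key conceptual point is that the discrepancy between $I^{(n)}$ (built only from minimal primes) and $I^{\langle n\rangle}$ (built from all associated primes) is governed entirely by whether $I$ has embedded primes, so the equality $I^n = I^{(n)}$ for all $n$ is strong enough to force the absence of embedded primes and simultaneously to make $I$ normally torsion-free.

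First I would prove the forward implication. Assume $I^n = I^{(n)}$ for all $n \geq 1$. The main obstacle here is showing that $I$ has no embedded primes, since Proposition~\ref{ntf-char} is only available \emph{after} we know there are no embedded primes. The idea is to observe that $I^{(1)}$ is, by definition, the intersection of the non-embedded primary components of $I$, whereas $I = I^{(1)}$ (taking $n=1$ in the hypothesis, which is trivially true) must equal the full primary decomposition. More precisely, if $I$ had an embedded prime $\mathfrak{p}$, then $\mathfrak{p} \in {\rm Ass}(I)$ but $\mathfrak{p} \notin {\rm Ass}(I^{(1)})$, because $I^{(1)}$ omits the embedded primary components; this would contradict $I = I^{(1)}$. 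Hence the hypothesis at $n=1$ already forces $I$ to have no embedded primes. With that in hand, Proposition~\ref{ntf-char} applies directly: $I^n = I^{(n)}$ for all $n$ is equivalent to $I$ being normally torsion-free, i.e.\ ${\rm Ass}(I^n) = {\rm Ass}(I)$ for all $n \geq 1$. This gives both conclusions of the forward direction.

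Next I would prove the converse. Assume $I$ has no embedded primes and ${\rm Ass}(I^n) = {\rm Ass}(I)$ for all $n \geq 1$; the latter is precisely the statement that $I$ is normally torsion-free. Since $I$ has no embedded primes, Proposition~\ref{ntf-char} applies and yields $I^n = I^{(n)}$ for all $n \geq 1$ directly. This direction is essentially immediate from Proposition~\ref{ntf-char}, requiring no additional argument beyond recognizing that the hypotheses are exactly those of that proposition.

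The step I expect to be the main obstacle is the claim in the forward direction that $I^1 = I^{(1)}$ forces the absence of embedded primes. The subtlety is that one must argue at the level of primary decompositions: the equality $I = I^{(1)}$ says the intersection of the minimal (non-embedded) primary components already equals $I$, which means no embedded component is needed, hence none exists. I would phrase this carefully using the definition of $I^{(1)}$ given in the introduction, namely that $I^{(1)}$ is the intersection of the non-embedded primary components of $I$, and the fact that a monomial ideal's irreducible (hence primary) decomposition is uniquely determined. Everything else reduces to direct citation of Proposition~\ref{ntf-char}.
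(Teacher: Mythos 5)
Your proof is correct and matches the intended derivation: the paper states this corollary without an explicit proof, and your argument --- that the case $n=1$, i.e.\ $I=I^{(1)}$, forces ${\rm Ass}(I)$ to consist only of minimal primes (by the uniqueness of the associated primes of an irredundant primary decomposition), after which Proposition~\ref{ntf-char} yields both directions --- is exactly what is needed. One small remark: since the corollary concerns an arbitrary ideal of $S$, your closing appeal to the uniqueness of irreducible decompositions of \emph{monomial} ideals should be replaced by the first uniqueness theorem for primary decompositions, which is what your argument actually uses.
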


\begin{lemma}\label{powers-m} Let $\mathfrak{m}=(t_1,\ldots,t_s)$ be
the irrelevant maximal ideal of $S$ and let $I\subset S$ be a graded
ideal. Then, the following hold:
\begin{enumerate}
\item[(a)] $IS_\mathfrak{m}\cap S=I$;
\item[(b)] If $\mathfrak{m}\in{\rm Ass}(I)$, then 
$I^{\langle n\rangle}=I^nS_\mathfrak{m}\cap S=I^n$ for all $n\geq 1$.
\end{enumerate}
\end{lemma}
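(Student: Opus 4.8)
The plan is to prove Lemma~\ref{powers-m} by reducing everything to the fact that $\mathfrak{m}=(t_1,\ldots,t_s)$ is the unique maximal graded ideal of $S$. For part~(a), the inclusion $I\subset IS_\mathfrak{m}\cap S$ is immediate. For the reverse inclusion, take $f\in IS_\mathfrak{m}\cap S$, so that $f=g/u$ with $g\in I$ and $u\in S\setminus\mathfrak{m}$; since $u$ is a polynomial with a nonzero constant term (it is not in $\mathfrak{m}$), I would argue that $uf=g\in I$ forces $f\in I$. The cleanest way is to observe that $I$ is a graded ideal and $u$ has an invertible constant term $c$ modulo $I$: writing $u=c+u'$ with $u'\in\mathfrak m$ and $c\in K^\times$, the relation $uf\in I$ together with a degree-by-degree comparison (or equivalently the fact that $S/I$ is graded and $u$ acts as a unit on it because its image is $c$ plus a nilpotent-on-each-graded-piece element) yields $f\in I$. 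Concretely, since any element of $S\setminus\mathfrak{m}$ maps to a unit in the local ring $S_\mathfrak m$ but also its image in $S/I$ is a unit (as $S/I$ is $\mathbb N$-graded with degree-zero part $K$), contraction returns $I$ itself.

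For part~(b), the key observation is that when $\mathfrak{m}\in{\rm Ass}(I)$, the maximal ideal $\mathfrak m$ is in particular an \emph{associated} (hence the unique maximal) prime relative to the localization at $\mathfrak m$, so the $\mathfrak m$-primary component of $I^n$ contributes nontrivially and in fact equals $I^n$ itself after contraction. I would first establish $I^nS_\mathfrak{m}\cap S=I^n$ directly from part~(a), applied with $I^n$ in place of $I$: since $I^n$ is again a graded ideal, part~(a) gives $I^nS_\mathfrak m\cap S=I^n$. This disposes of the equality $I^nS_\mathfrak m\cap S=I^n$ with no further work. It remains to identify $I^{\langle n\rangle}$ with this common value.

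For the identification $I^{\langle n\rangle}=I^nS_\mathfrak m\cap S$, I would use the definition $I^{\langle n\rangle}=\bigcap_{\mathfrak p\in{\rm maxAss}(I)}(I^nS_\mathfrak p\cap S)$ recalled in the introduction. The hypothesis $\mathfrak m\in{\rm Ass}(I)$ means $\mathfrak m$ is an associated prime, and since $\mathfrak m$ is the irrelevant maximal ideal it is the unique \emph{maximal} element of the partially ordered set ${\rm Ass}(I)$; thus ${\rm maxAss}(I)=\{\mathfrak m\}$. Therefore the intersection defining $I^{\langle n\rangle}$ collapses to the single term $I^nS_\mathfrak m\cap S$, which by the previous paragraph equals $I^n$. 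Chaining these gives $I^{\langle n\rangle}=I^nS_\mathfrak m\cap S=I^n$ for all $n\geq 1$.

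The main obstacle I anticipate is the careful justification in part~(a) that contracting from the localization does not enlarge a graded ideal, i.e., that an element of $S\setminus\mathfrak m$ is a nonzerodivisor-and-unit modulo $I$ in the relevant graded sense. This is where one must use that $S$ is $\mathbb N$-graded with $S_0=K$ and that $I$ is graded, so that no cancellation across graded components can move a non-member of $I$ into $I$ after clearing a denominator with nonzero constant term. The rest of the argument is formal bookkeeping with the definitions of symbolic power via ${\rm maxAss}$ and the identification of $\mathfrak m$ as the unique maximal associated prime.
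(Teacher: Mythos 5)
Your proof is correct, and part (b) is essentially identical to the paper's (both reduce to ${\rm maxAss}(I)=\{\mathfrak m\}$ and then invoke part (a) for the graded ideal $I^n$). For part (a) you take a genuinely different route: the paper picks a primary decomposition $I=\bigcap_i Q_i$ with graded components $Q_i$ and argues that $hf\in Q_i$ with $f\notin Q_i$ would force $h^p\in Q_i\subset\mathfrak m$, hence $h\in\mathfrak m$, a contradiction; you instead work directly in the graded ring $S/I$ and observe that for $u=c+u'$ with $c\in K^{\times}$ and $u'\in\mathfrak m$, the lowest-degree homogeneous component of $\bar u\bar f$ is $c\,\bar f_{d_0}$, so $\bar u\bar f=0$ forces $\bar f=0$. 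Your argument is more elementary (no primary decomposition needed) and shows the sharper fact that every element of $S\setminus\mathfrak m$ is a nonzerodivisor on $S/I$; the paper's argument is the one that generalizes to localization at an arbitrary prime, where one keeps exactly the primary components contained in that prime. One caveat: your parenthetical claim that the image of $u$ in $S/I$ is a \emph{unit} is false in general (e.g.\ $1+t_1$ is not a unit in $S/I$ when $I=0$), and $u'$ is not ``nilpotent on each graded piece''; what your degree comparison actually proves, and all that is needed, is that $\bar u$ is a nonzerodivisor. You should strike that parenthetical or replace ``unit'' by ``nonzerodivisor''; with that correction the proof stands.
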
 

\begin{proof} (a) Clearly $IS_\mathfrak{m}\cap S\supset I$. To show
the reverse inclusion take $f\in IS_\mathfrak{m}\cap S$. Then, $f=g/h$,
$g\in I$, $h\notin\mathfrak{m}$. Thus, $hf\in I$. Pick a primary
decomposition $I=\bigcap_{i=1}^\ell Q_i$, where the $Q_i$'s are
graded. Then, $hf\in\ Q_i$ for
all $i$. If $f\notin Q_i$ for some $i\in\{1,\ldots,\ell\}$, 
then $h^p\in Q_i$ for some $p\geq 1$. Since
$Q_i$ is graded, $h^p\in Q_i\subset\mathfrak{m}$. Thus,
$h\in\mathfrak{m}$, a contradiction. This proves that $f\in Q_i$ for
all $i$, that is, $f\in I$.

(b) Note that ${\rm maxAss}(I)=\{\mathfrak{m}\}$. Then, by part (a), we
get $I^{\langle n\rangle}=I^nS_\mathfrak{m}\cap 
S =I^n$.
\end{proof}

\begin{proposition}\cite[Proposition~3.6]{cooper-etal}\label{nov6-21}
Let $I\subset S$ be a monomial ideal. If $\mathfrak{p}\in {\rm
Ass}(I)$, then $
I^nS_\mathfrak{p}\cap S=(IS_\mathfrak{p}\cap S)^n$ for all 
$n\geq 1$.
\end{proposition}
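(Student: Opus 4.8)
The plan is to exploit the fact that $\mathfrak{p}$, being an associated prime of the monomial ideal $I$, is itself a monomial prime, say $\mathfrak{p}=(t_i\mid i\in P)$ for some $P\subseteq\{1,\ldots,s\}$. Setting $u:=\prod_{j\notin P}t_j$, the first step is to recognize the contraction $IS_\mathfrak{p}\cap S$ as the saturation $(I:u^\infty)=\bigcup_{k\geq 1}(I:u^k)$. Indeed, $f\in IS_\mathfrak{p}\cap S$ means $hf\in I$ for some $h\notin\mathfrak{p}$; since $I$ is a monomial ideal one may reduce to the case where $f$ is a monomial and replace $h$ by a monomial in the variables $t_j$ with $j\notin P$, and every such monomial divides a power of $u$. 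With this reduction the statement becomes the purely combinatorial identity $(I^n:u^\infty)=(I:u^\infty)^n$, where only the monomial nature of $I$ will be needed.

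One inclusion is formal and requires no monomiality: since $(IS_\mathfrak{p})^n=I^nS_\mathfrak{p}$, any product of $n$ elements of $IS_\mathfrak{p}\cap S$ lies in $I^nS_\mathfrak{p}\cap S$, giving $(IS_\mathfrak{p}\cap S)^n\subseteq I^nS_\mathfrak{p}\cap S$. The substance of the proposition is the reverse inclusion $(I^n:u^\infty)\subseteq(I:u^\infty)^n$, and here I would argue combinatorially via projection onto the $\mathfrak{p}$-coordinates. Write $G(I)=\{t^{v_1},\ldots,t^{v_q}\}$ with $v_i=(v_{i,1},\ldots,v_{i,s})$, and define for each generator its projection $\bar v_i:=\prod_{l\in P}t_l^{v_{i,l}}$. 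A direct check shows that a monomial $t^a$ lies in $(I:u^\infty)$ precisely when some $\bar v_i$ divides $t^a$: divisibility of $u^kt^a$ by $t^{v_i}$ forces $v_{i,l}\leq a_l$ for $l\in P$ but places no constraint on the coordinates outside $P$ once $k$ is large. Hence $(I:u^\infty)$ is the monomial ideal generated by $\bar v_1,\ldots,\bar v_q$.

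Applying the same description to $I^n$, whose generators are the products $t^{v_{i_1}+\cdots+v_{i_n}}$, shows that $(I^n:u^\infty)$ is generated by the projections $\bar v_{i_1}\cdots\bar v_{i_n}$; since projection onto the $P$-coordinates carries a product of monomials to the product of their projections, this set of generators is exactly the one generating $(I:u^\infty)^n$, so the two saturations coincide. Concretely, if $u^Nt^a\in I^n$ then $u^Nt^a$ is divisible by some $t^{v_{i_1}+\cdots+v_{i_n}}$, whence $(v_{i_1})_l+\cdots+(v_{i_n})_l\leq a_l$ for every $l\in P$; thus $\bar v_{i_1}\cdots\bar v_{i_n}\in(I:u^\infty)^n$ divides $t^a$ and forces $t^a\in(I:u^\infty)^n$. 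I expect the main obstacle to be the careful bookkeeping in this last step: one must verify that the test element can always be taken to be a power of $u$ (equivalently, that monomial denominators suffice), which is precisely where the monomial structure of $I$ is indispensable and where the analogous statement fails for arbitrary ideals.
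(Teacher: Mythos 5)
Your argument is correct. Note, however, that the paper does not prove this proposition at all: it is quoted verbatim from Cooper--Embree--H\`a--Hoefel \cite[Proposition~3.6]{cooper-etal}, so there is no in-paper proof to compare against. What you supply is a self-contained proof along the standard lines, and it is essentially the same mechanism used in the cited source: identify the contraction $IS_\mathfrak{p}\cap S$ with the saturation $(I:u^{\infty})$, where $u$ is the product of the variables outside $\mathfrak{p}$, observe that this saturation is the monomial ideal generated by the projections $\bar v_i$ of the generators onto the $\mathfrak{p}$-coordinates, and conclude because projection of exponent vectors is additive, so it carries the generating set of $I^n$ onto that of $(I:u^{\infty})^n$. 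Two small points deserve explicit mention. First, in the reduction to monomial $f$ and monomial $h$ you should say why $IS_\mathfrak{p}\cap S$ is itself a monomial ideal (e.g., it is the intersection of those monomial primary components of $I$ whose radicals are contained in $\mathfrak{p}$, or one extracts from $hf\in I$ a monomial relation by comparing supports); this is standard but is exactly the step you flag as needing care. Second, your proof never uses that $\mathfrak{p}$ is associated to $I$ --- only that $\mathfrak{p}$ is a monomial prime, which every associated prime of a monomial ideal is --- so your argument in fact establishes the identity for an arbitrary monomial prime, slightly more than the statement claims. Neither point is a gap.
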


\begin{proposition}\label{np-qa} Let
$I$ be a monomial ideal of $S$, let $u_1,\ldots,u_r$ be 
the vertices of $\mathcal{Q}(I)$, and let $\overline{B}$ be the matrix with
column vectors $u_1,\ldots,u_r$. The following hold.
\begin{enumerate}
\item[(a)] \cite[Proposition~3.5(b)]{reesclu} ${\rm NP}(I)=\mathcal{Q}(\overline{B})=\{x\vert\, x\geq 0;\,
x\overline{B}\geq 1\}$; 
\item[(b)] If $I=(t^{v_1},\ldots,t^{v_q})$,
then the vertices of ${\rm NP}(I)$ are
contained in $\{v_1,\ldots,v_q\}$. 
\end{enumerate}
\end{proposition}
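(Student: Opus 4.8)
Part (a) is quoted verbatim from \cite[Proposition~3.5(b)]{reesclu}, so nothing needs to be proved there, and the plan below concerns only part (b). My plan is to work directly with the Minkowski-sum description ${\rm NP}(I)=\mathbb{R}_+^s+{\rm conv}(v_1,\ldots,v_q)$ of Eq.~\eqref{NP-def}, and to use the characterization of a vertex of a pointed polyhedron as an extreme point: a point $u\in{\rm NP}(I)$ is a vertex precisely when $u=\tfrac12(x+y)$ with $x,y\in{\rm NP}(I)$ forces $x=y=u$. Since $v_i\in\mathbb{N}^s$, we have ${\rm NP}(I)\subset\mathbb{R}_+^s$, so the polyhedron contains no line and its vertices coincide with its extreme points.

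Let $u$ be a vertex of ${\rm NP}(I)$. First I would write $u=r+p$ with $r\in\mathbb{R}_+^s$ and $p\in{\rm conv}(v_1,\ldots,v_q)$, using the Minkowski-sum description, and show that the recession part vanishes, i.e.\ $r=0$. The argument is that if $r\neq 0$, then both $p=u-r$ and $u+r$ lie in ${\rm NP}(I)$ (the latter because $2r\in\mathbb{R}_+^s$), while $u=\tfrac12\big(p+(u+r)\big)$ exhibits $u$ as the midpoint of two distinct points of ${\rm NP}(I)$, contradicting extremality. Hence $r=0$ and $u\in{\rm conv}(v_1,\ldots,v_q)$.

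Next I would identify $u$ among the generators. Because ${\rm conv}(v_1,\ldots,v_q)\subset{\rm NP}(I)$ and $u$ is extreme in the larger set, $u$ is automatically extreme in the polytope ${\rm conv}(v_1,\ldots,v_q)$ as well: any writing $u=\tfrac12(x+y)$ with $x,y\in{\rm conv}(v_1,\ldots,v_q)$ is in particular such a writing inside ${\rm NP}(I)$, forcing $x=y=u$. Since the extreme points of a polytope lie among any finite set of points generating it, I conclude $u=v_i$ for some $i$, which is exactly the assertion of (b).

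The step I expect to be the crux is showing $r=0$, where the unboundedness of ${\rm NP}(I)$ enters through its recession cone $\mathbb{R}_+^s$; the whole point is that a vertex cannot carry a nonzero component along a recession direction. An alternative route via supporting linear functionals also works, but it requires arranging a \emph{strictly} positive functional to pin down a unique minimizer, a subtlety the extreme-point argument sidesteps. The remaining ingredients---inheritance of extremality by convex subsets and the fact that the extreme points of a polytope lie among its generators---are standard, so I anticipate no further obstacle.
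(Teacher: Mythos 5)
Your argument for (b) is correct. The paper itself disposes of (b) in one line, by citing \cite[Propositions 1.1.36 and 1.1.39]{monalg-rev} applied to the Minkowski-sum description ${\rm NP}(I)=\mathbb{R}_+^s+{\rm conv}(v_1,\ldots,v_q)$; your proof is a self-contained version of exactly that argument. The two key steps you isolate --- that a vertex $u=r+p$ (with $r\in\mathbb{R}_+^s$, $p\in{\rm conv}(v_1,\ldots,v_q)$) must have $r=0$, via the midpoint $u=\tfrac12\bigl(p+(u+r)\bigr)$ of the two points $p$ and $u+r$ of ${\rm NP}(I)$, and that extremality passes down to the subpolytope ${\rm conv}(v_1,\ldots,v_q)$, whose extreme points lie among the $v_i$ --- are precisely the content of the cited propositions, and your identification of vertices with extreme points is legitimate since ${\rm NP}(I)\subset\mathbb{R}_+^s$ is pointed. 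So the approach is essentially the same; what your write-up buys is independence from the textbook reference, at the cost of a few lines of standard convexity.
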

\begin{proof} (b) Since ${\rm NP}(I)=\mathbb{R}_+^s+{\rm
conv}(v_1,\ldots,v_q)$, by \cite[Propositions 1.1.36 and
1.1.39]{monalg-rev}, the vertices of ${\rm NP}(I)$ are contained 
in the set $\{v_1,\ldots,v_q\}$. 
\end{proof}

The following result shows that the Cohen--Macaulay property of the
edge ideal of a weighted oriented graph is independent of the weights 
we assign to sinks.   

\begin{proposition}\cite[Lemma~4]{cm-oriented-trees}\label{change-of-variable}  
Let $I\subset S$ be a monomial ideal, with 
$\mathcal{G}(I)=\{t^{v_1},\ldots,t^{v_q}\}$ and
$v_i=(v_{i,1},\ldots,v_{i,s})$ for $i=1,\ldots,q$. Suppose there are
$k$ and $n$ such that $v_{i,k}=1$ for $i=1,\ldots,n$ and $v_{i,k}=0$ for
$i>n$. Let $w_k\in\mathbb{N}_+$ be a weight for $t_k$. If $u$ is a new
variable and $J$ is the ideal of $S[u]$ generated by the monomials 
obtained from $\mathcal{G}(I)$ by replacing $t_k$ by $u^{w_k}$, then $I$ is
Cohen--Macaulay if and only if $J$ is Cohen--Macaulay.
\end{proposition}

\begin{proof} To simplify notation we assume that $k=s$. We grade
$S[u]$ by $\deg(t_s)=w_s$, $\deg(t_i)=1$ for $i\neq s$, and
$\deg(u)=1$. Let $\prec$ be the graded reverse lexicographical order on $S[u]$. In this order
$t_1\succ\cdots\succ t_s\succ u$. We set $f=t_s-u^{w_s}$. This
polynomial is homogeneous of degree $w_s$ and its leading 
term is $t_s$. From the equalities
$$
t^{v_i}-t_1^{v_{i,1}}\cdots
t_{s-1}^{v_{i,s-1}}(t_s-u^{w_s})=t_1^{v_{i,1}}\cdots
t_{s-1}^{v_{i,s-1}} u^{w_s},\ i=1,\ldots,n, 
$$
we obtain $(I,f)=(J,f)$, and $\mathcal{G}(J)\cup\{f\}$ is a Gr\"obner basis of
$(I,f)$. The polynomial $f$ and the variable $t_s$ are both regular on
$S[u]/J$ because the variable $t_s$ does not appear in any minimal
generator of $J$, and $f$ and $u$ are both regular on $S[u]/I$ because
$u$ does not appear in $t^{v_i}$ for $i=1,\ldots,q$. Then, by
\cite[Proposition~2.3.12]{monalg-rev}, the
following conditions are equivalent:
\begin{enumerate}
\item $K[t_1,\ldots,t_{s-1},u]/J$ is Cohen--Macaulay; 
\item $S[u]/J$ is Cohen--Macaulay;
\item $S[u]/(J,t_s-u^{w_s})=S[u]/(I,t_s-u^{w_s})$ is Cohen--Macaulay;
\item $S[u]/I$ is Cohen--Macaulay;
\item $S/I$ is Cohen--Macaulay;
\end{enumerate}
and the proof is complete.
\end{proof}

\section{Equality of ordinary and symbolic powers of edge ideals}\label{symbolic-section}

Let $S=K[t_1,\ldots,t_s]$ be a polynomial ring over
a field $K$, let $D$ be a weighted oriented graph with vertex set
$V(D)=\{t_1,\ldots,t_s\}$ whose underlying
graph is $G$, and let $I(D)$ be the edge ideal of $D$. In this
section, we give combinatorial classifications of the equality
$I(D)^2=I(D)^{(2)}$, the equality of all ordinary and symbolic
powers of $I(D)$, and the torsion-freeness of $I(D)$.  
To avoid repetitions, we continue to employ 
the notations and definitions used in Sections~\ref{intro-section} and
\ref{prelim-section}.

\begin{lemma}\label{nov27-21}
Let $D$ be a weighted oriented graph and let
$I(D)=\bigcap_{i=1}^m\mathfrak{q}_i$ be the irreducible decomposition
of $I(D)$. If there exists
a vertex $v\in V^+(D)$ that is neither a source nor a sink, then
$\bigcap_{i=1}^m\mathfrak{q}_i^2\not\subset I(D)^2$. 
\end{lemma}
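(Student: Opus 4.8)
The plan is to exhibit an explicit monomial that lies in $\bigcap_{i=1}^m \mathfrak{q}_i^2$ but not in $I(D)^2$, using the combinatorial hypothesis that $v \in V^+(D)$ is neither a source nor a sink. Say $v = t_k$ with weight $w_k \geq 2$. Since $v$ is not a sink, there is an outgoing edge $(t_k, t_j) \in E(D)$, so the generator $t_k t_j^{w_j}$ belongs to $I(D)$. Since $v$ is not a source, there is an incoming edge $(t_i, t_k) \in E(D)$, so the generator $t_i t_k^{w_k}$ belongs to $I(D)$.

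First I would use Lemma~\ref{duality-of-exponents} to control the exponent of $t_k$ in each irreducible component. That lemma says the set $\{t_\ell^{v_{p,\ell}} \mid v_{p,\ell} \geq 1\}$ of pure-power "pieces" of the generators of $I(D)$ equals the union $\bigcup_i G(\mathfrak{q}_i)$. The key observation is that $t_k$ appears in the generators of $I(D)$ with two different exponents: exponent $1$ (from the outgoing edge $t_k t_j^{w_j}$) and exponent $w_k \geq 2$ (from the incoming edge $t_i t_k^{w_k}$). Correspondingly, among the generators of the components $\mathfrak{q}_i$, the variable $t_k$ can only occur as $t_k^1$ or as $t_k^{w_k}$. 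I would then analyze, component by component, the smallest power of $t_k$ a generator of $\mathfrak{q}_i$ can contribute, and combine this with the contributions of $t_i$ and $t_j$.

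The candidate monomial I would try is
\begin{equation*}
f = t_i\, t_k^{w_k}\, t_j^{w_j} = (t_i t_k^{w_k})\cdot(t_k^{\,0} t_j^{w_j}),
\end{equation*}
or a small variant such as $t_i t_k^{w_k+1} t_j^{w_j}$, chosen so that its exponent vector sits in each $\mathfrak{q}_i^2$ yet cannot be split as a product of two generators of $I(D)$. To show $f \in \mathfrak{q}_i^2$ for every $i$, I would split into cases according to whether $t_k \in G(\mathfrak{q}_i)$ appears as $t_k$ or $t_k^{w_k}$, or not at all, and in each case check that the two factors of $\mathfrak{q}_i$ can be supplied by the available powers of $t_i, t_k, t_j$; here the strong-cover description of the components from Theorem~\ref{pitones-reyes-toledo} pins down exactly which variables (and which powers) generate each $\mathfrak{q}_i$. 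To show $f \notin I(D)^2$, I would argue that any factorization of $f$ as a product of two generators of $I(D)$ is forced to overuse the total degree in $\{t_i, t_k, t_j\}$: each generator of $I(D)$ of the form $t_a t_b^{w_b}$ demands both a vertex with exponent $1$ and a neighbor raised to its full weight, and the limited exponents in $f$ cannot accommodate two such generators simultaneously.

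The main obstacle will be the case analysis verifying $f \in \mathfrak{q}_i^2$ uniformly over all strong vertex covers $C$, since different covers place $t_k$ in different parts of the partition $\{L_1(C), L_2(C), L_3(C)\}$ and hence contribute $t_k$ with exponent $1$ or $w_k$ accordingly; I must ensure the exponent of $t_k$ in $f$ is large enough to clear $2\cdot w_k$ whenever $t_k^{w_k}$ generates $\mathfrak{q}_i$, which is precisely what pins down the correct choice of exponent in $f$ and is where the hypothesis $w_k \geq 2$ together with $v$ being neither source nor sink is essential. Once the right monomial is calibrated against Lemma~\ref{duality-of-exponents}, the membership in each $\mathfrak{q}_i^2$ and the failure of membership in $I(D)^2$ should both reduce to checking finitely many inequalities on exponent vectors.
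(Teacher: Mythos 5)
Your proposal follows essentially the same route as the paper: the witness monomial $f=t_i t_k^{w_k} t_j^{w_j}$ is exactly the one used there, and the two-step plan (show $f\in\mathfrak{q}^2$ for each irreducible component by cases on whether $t_k$ occurs in $G(\mathfrak{q})$ with exponent $1$, exponent $w_k$, or not at all, using Theorem~\ref{pitones-reyes-toledo} and Lemma~\ref{duality-of-exponents}; show $f\notin I(D)^2$ by ruling out every product of two generators supported on $\{t_i,t_k,t_j\}$, including a possible edge between $t_i$ and $t_j$) matches the paper's proof. Two calibration remarks: your fallback candidate $t_i t_k^{w_k+1}t_j^{w_j}$ would \emph{fail}, since it equals $(t_i t_k^{w_k})(t_k t_j^{w_j})\in I(D)^2$; and the worry about clearing $2w_k$ is unfounded --- when $t_k^{w_k}\in G(\mathfrak{q})$ the two factors from $\mathfrak{q}$ need not both be powers of $t_k$ (one can come from $t_1$ or $t_j^{w_j}$), and the place where $w_k\geq 2$ is genuinely needed is the case $t_k\in G(\mathfrak{q})$, where $t_k^2$ divides $f$.
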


\begin{proof} There are $u,x$ in $V(D)$ such that $(u,v)$, $(v,x)$
are in $E(D)$. Let $V(D)=\{t_1,\ldots,t_s\}$ be the vertex 
set of $D$ and let $w_i$ be the weight of $t_i$. We may assume
$u=t_1$, $v=t_2$, $x=t_3$, and $w_2\geq 2$. Then, the monomials
$g_1:=t_1t_2^{w_2}$ and
$g_2:=t_2t_3^{w_3}$ are in $\mathcal{G}(I(D))$, the minimal generating set of $I(D)$.
We set $f=t_1t_2^{w_2}t_3^{w_3}$. We claim that $f\notin I(D)^2$. We
argue by contradiction assuming that $f\in I(D)^2$. There are three
cases to consider.

\begin{enumerate}
\item[(I)] $(t_1,t_3)\in E(D)$, i.e., $g_3:=t_1t_3^{w_3}\in I(D)$. Then, there are
$f_1,f_2\in\{g_1,g_2,g_3\}$ and $t^\delta\in S$ such that
$f=t^\delta f_1f_2$. Clearly $g_i^2$ does not divide $f$ for
$i=1,2,3$. Thus, $f_1\neq f_2$. Then
$$
f=t_1t_2^{w_2}t_3^{w_3}=
\begin{cases}
t^\delta(t_1t_2^{w_2})(t_2t_3^{w_3})& \mbox{or},\\
t^\delta(t_1t_2^{w_2})(t_1t_3^{w_3})& \mbox{or},\\
t^\delta(t_2t_3^{w_3})(t_1t_3^{w_3}).&
\end{cases}
$$
\item[(II)] $(t_3,t_1)\in E(D)$, i.e., $g_3:=t_3t_1^{w_1}\in I(D)$. Then, there are
$f_1,f_2\in\{g_1,g_2,g_3\}$ and $t^\delta\in S$ such that
$f=t^\delta f_1f_2$. Clearly $g_i^2$ does not divide $f$ for
$i=1,2,3$. Thus, $f_1\neq f_2$. Then
$$
 f=t_1t_2^{w_2}t_3^{w_3}=
\begin{cases}
t^\delta(t_1t_2^{w_2})(t_2t_3^{w_3})& \mbox{or},\\
t^\delta(t_1t_2^{w_2})(t_3t_1^{w_1})& \mbox{or},\\
t^\delta(t_2t_3^{w_3})(t_3t_1^{w_1}).
\end{cases}
$$
\item[(III)] $(t_1,t_3)$ and $(t_3,t_1)$ are not in $E(D)$, that is, 
$t_1t_3^{w_3}\notin I(D)$ and $t_3t_1^{w_1}\notin
I(D)$. Then, there are
$f_1,f_2\in\{g_1,g_2\}$ and $t^\delta\in S$ such that
$f=t^\delta f_1f_2$. Clearly $g_i^2$ does not divide $f$ for $i=1,2$.
Thus, $f_1\neq f_2$. Then
$f=t_1t_2^{w_2}t_3^{w_3}=t^\delta(t_1t_2^{w_2})(t_2t_3^{w_3})$. 
\end{enumerate}
\quad In each of the three cases, recalling that $w_i\geq 1$ for all
$i$, we get a contradiction. This proves that $f\notin
I(D)^2$. Let $\mathfrak{q}=\mathfrak{q}_i$ be any irreducible
component of $I(D)$. Next we show that $f\in\mathfrak{q}^2$, and
consequently $f\in\bigcap_{i=1}^m\mathfrak{q}_i^2$. Let
$\mathcal{G}(\mathfrak{q})$ be the minimal generating set of $\mathfrak{q}$. By 
Theorem~\ref{pitones-reyes-toledo}, every monomial in
$\mathcal{G}(\mathfrak{q})$ has the form
$t_p^{\ell_p}$ for some $1\leq p\leq s$, $\ell_p\geq 1$ and 
$t_p^{\ell_p}\in\{t_p,t_p^{w_p}\}$ (cf.
Lemma~\ref{duality-of-exponents}). 
As $t_1t_2^{w_2}$ and
$t_2t_3^{w_3}$ are in $\mathcal{G}(I(D))$, we can write
\begin{align*}
t_1t_2^{w_2}&= t^\delta t_j^{\ell_j}\mbox{ for some }t^\delta\in
S\mbox{ and }t_j^{\ell_j}\in \mathcal{G}(\mathfrak{q}),\mbox{ then
}t_j^{\ell_j}\in\{t_j,t_j^{w_j}\},
\\ t_2t_3^{w_3}&=t^\gamma t_n^{\ell_n}\mbox{ for some }t^\gamma\in
S\mbox{ and }t_n^{\ell_n}\in \mathcal{G}(\mathfrak{q}),\mbox{ then
}t_n^{\ell_n}\in\{t_n,t_n^{w_n}\}.
\end{align*}
\quad There are two cases to consider.
\begin{enumerate}
\item[(A)] Assume that $t_2\notin \mathcal{G}(\mathfrak{q})$. There are two
subcases to consider
\begin{enumerate}
\item[(A.1)] $t_2$ does not divide $t^\gamma$. Then, $t_n^{\ell_n}=t_2$,
a contradiction because $t_2\notin\mathcal{G}(\mathfrak{q})$.

\item[(A.2)] $t_2$
divides $t^\gamma$. Then, $t_3^{w_3}=(t^\gamma/t_2)t_n^{\ell_n}$, and
$t_n^{\ell_n}=t_3$ or $t_n^{\ell_n}=t_3^{w_3}$. 
\begin{enumerate}
\item[(A.2.1)] $t_1$ divides $t^{\delta}$. Then,
$t_2^{w_2}=(t^\delta/t_1)t_j^{\ell_j}$, and 
$t_j^{\ell_j}=t_2^{w_2}$ since $t_2\notin\mathcal{G}(\mathfrak{q})$. Thus,
$f=t_1t_2^{w_2}t_3^{w_3}\in\mathfrak{q}^2$. 

\item[(A.2.2)] $t_1$ does not divide $t^{\delta}$. Then,
$t_j^{\ell_j}=t_1$. Thus, $f=t_1t_2^{w_2}t_3^{w_3}\in\mathfrak{q}^2$. 
\end{enumerate}
\end{enumerate}
\item[(B)] Assume that $t_2\in \mathcal{G}(\mathfrak{q})$. Then, 
$t_2^2\in\mathfrak{q}^2$, and $f=t_1t_2^{w_2}t_3^{w_3}\in\mathfrak{q}^2$ because $w_2\geq
2$.
\end{enumerate}
\quad Therefore, $f\in(\bigcap_{i=1}^m\mathfrak{q}_i^2)\setminus I(D)^2$
and the proof is complete.
\end{proof}

We come to one of our main results.

\begin{theorem}\label{I2=I(2)}  
Let $D$ be a weighted oriented graph and let $G$ be its underlying 
graph. Then, $I(D)^2=I(D)^{(2)}$ if and only if the following
two conditions hold:
\begin{enumerate}
\item[{\rm (i)}] Every vertex in $V^+(D)$ is a sink;
\item[{\rm (ii)}] $G$ has no triangles.
\end{enumerate}
\end{theorem}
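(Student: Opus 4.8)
The plan is to reduce both implications, once condition~(i) is available, to the corresponding statement for the underlying graph $G$ by means of the dictionary furnished by Lemma~\ref{WOG-lemma-mandal}(b). Since one always has $I(D)^2\subseteq I(D)^{(2)}$, everything comes down to the reverse inclusion, and the strategy is to first force condition~(i) using Lemma~\ref{nov27-21}, and then transport the problem to $G$, where the relevant fact is the graph-level characterization ``$I(G)^2=I(G)^{(2)}$ if and only if $G$ has no triangle.''

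For the necessity direction I would assume $I(D)^2=I(D)^{(2)}$ and first establish~(i). Suppose, for contradiction, that some $v\in V^+(D)$ is not a sink. Because source vertices may be taken to have weight $1$ (Remark~\ref{nov30-21}) while $w(v)>1$, the vertex $v$ is not a source either, so $v$ is neither a source nor a sink. Lemma~\ref{nov27-21} then yields an element $f\in\bigcap_{i=1}^m\mathfrak{q}_i^2$ with $f\notin I(D)^2$, where $I(D)=\bigcap_{i=1}^m\mathfrak{q}_i$ is the irreducible decomposition. Since the components with minimal radical form a subfamily of $\{\mathfrak{q}_1,\dots,\mathfrak{q}_m\}$, Lemma~\ref{nov1-21} gives $\bigcap_{i=1}^m\mathfrak{q}_i^2\subseteq I(D)^{(2)}$, so $f\in I(D)^{(2)}\setminus I(D)^2$, contradicting the hypothesis; hence~(i) holds. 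With~(i) in force, Lemma~\ref{WOG-lemma-mandal}(b) converts the hypothesis into $I(G)^2=I(G)^{(2)}$, and the easy half of the graph statement finishes the job: if $G$ contained a triangle on vertices $t_i,t_j,t_k$, then $t_it_jt_k$ would lie in $I(G)^{(2)}$ — it has degree at least $2$ on every minimal vertex cover, since any such cover misses at most one of $t_i,t_j,t_k$ — but not in $I(G)^2$, which is generated in degree $4$; this contradiction shows $G$ is triangle-free, giving~(ii).

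For the sufficiency direction I would assume~(i) and~(ii). Condition~(i) again lets me apply Lemma~\ref{WOG-lemma-mandal}(b), reducing the desired equality $I(D)^2=I(D)^{(2)}$ to the graph-level equality $I(G)^2=I(G)^{(2)}$. By~(ii) the graph $G$ has no triangle, so the substantive half of the graph characterization applies and delivers $I(G)^2=I(G)^{(2)}$, completing the argument.

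The crux is the graph-level equivalence, and within it the implication ``$G$ triangle-free $\Rightarrow I(G)^2=I(G)^{(2)}$''; the reverse implication is exactly the triangle witness used above. The heart of the hard direction is that the only squarefree degree-three monomials forced into $I(G)^{(2)}$ are the triangle monomials: via the correspondence between minimal vertex covers and maximal independent sets, $t_it_jt_k\in I(G)^{(2)}$ precisely when no maximal independent set contains two of $t_i,t_j,t_k$, that is, when $\{t_i,t_j,t_k\}$ spans a triangle. I expect the main obstacle to be verifying that, in the triangle-free case, no higher-degree minimal generator of $I(G)^{(2)}$ escapes $I(G)^2$ either; I would dispatch this by invoking the known characterization of the equality of the ordinary and symbolic square of an edge ideal of a graph rather than re-deriving it in place.
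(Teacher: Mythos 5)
Your proposal is correct and follows essentially the same route as the paper: condition (i) is forced via Lemma~\ref{nov27-21} together with Lemma~\ref{nov1-21}, and both remaining implications are transported to the underlying graph $G$ by Lemma~\ref{WOG-lemma-mandal}(b), where the triangle-free characterization of $I(G)^2=I(G)^{(2)}$ (which the paper cites from the literature, and whose easy half you correctly re-derive via the triangle monomial $t_it_jt_k$) finishes the argument.
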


\begin{proof} $\Rightarrow$) (i) We argue by contradiction assuming there
is $v$ in $V^+(D)$ which is not a sink. Note that $v$ is not a
source because all sources of $D$ have weight $1$. Let
$\mathfrak{p}_1,\ldots,\mathfrak{p}_r$ be the minimal primes of $I(D)$ and let $\mathfrak{q}_i$ be the 
$\mathfrak{p}_i$-primary component of $I(D)$ for $i=1,\ldots,r$. Then,
by Theorem~\ref{pitones-reyes-toledo}, $\mathfrak{q_i}$ is an irreducible
component of $I(D)$ for $i=1,\ldots,r$ and, by Lemma~\ref{nov1-21}, 
$I(D)^{(2)}=\bigcap_{i=1}^r\mathfrak{q}_i^2$. Thus, 
by Lemma~\ref{nov27-21}, there is
$f\in I(D)^{(2)}\setminus I(D)^2$, a contradiction. 

(ii) By part (i) all vertices of $V^+(D)$ are sinks. Therefore, by
Lemma~\ref{WOG-lemma-mandal}(a), the ideal $I(D)$ has no embedded primes. If $G$ has a
triangle with vertices $v_1,v_2,v_3$, then one has that $v_1v_2v_3\in
I(G)^{(2)}\setminus I(G)^2$
\cite[Proposition~4.10]{symbolic-powers-survey} and $I(G)^2\subsetneq
I(G)^{(2)}$. Then, by
Lemma~\ref{WOG-lemma-mandal}(b),
$I(D)^2\subsetneq I(D)^{(2)}$, a contradiction. Hence, $G$ has no
triangles.

$\Leftarrow$) As $D$ satisfies (ii), by \cite[Theorem~4.13]{symbolic-powers-survey}, one has
$I(G)^2=I(G)^{(2)}$. Hence, using that $D$ satisfies (i) and applying    
Lemma~\ref{WOG-lemma-mandal}(b), we get $I(D)^2=I(D)^{(2)}$.
\end{proof}

We characterize the equality
of ordinary and symbolic powers of $I(D)$. Mandal and Pradhan showed that conditions (a) and (b) of
Theorem~\ref{In=I(n)} are sufficient conditions for the equality
of ordinary and symbolic powers of $I(D)$ \cite[Corollary 3.8]{Mandal-Pradhan}. 

\begin{theorem}\label{In=I(n)}  
Let $D$ be a weighted oriented graph and let $G$ be its underlying 
graph. Then, $I(D)^n=I(D)^{(n)}$ for all $n\geq 1$ if and only if the following
two conditions hold:
\begin{enumerate}
\item[{\rm (a)}] Every vertex in $V^+(D)$ is a sink;
\item[{\rm (b)}] $G$ is a bipartite graph.
\end{enumerate}
\end{theorem}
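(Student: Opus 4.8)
The plan is to reduce everything to the already-established Theorem~\ref{I2=I(2)} together with the transfer result Lemma~\ref{WOG-lemma-mandal}(b), which, under the hypothesis that all vertices of $V^+(D)$ are sinks, converts the equality of ordinary and symbolic powers for $I(D)$ into the corresponding equality for the edge ideal $I(G)$ of the underlying graph. The final input will be the classical characterization that $I(G)^n=I(G)^{(n)}$ for all $n\geq 1$ if and only if $G$ is bipartite, cited in the introduction as \cite[Theorem~1.4]{hhtz} and \cite[Theorem~5.9]{ITG}. There is no serious computation to carry out; the content is a two-step reduction.

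For the forward implication, assume $I(D)^n=I(D)^{(n)}$ for all $n\geq 1$. The first step is to specialize to $n=2$: from $I(D)^2=I(D)^{(2)}$, Theorem~\ref{I2=I(2)} yields at once that every vertex of $V^+(D)$ is a sink, which is precisely condition (a). The point to stress is that the single exponent $n=2$ already forces the sink structure. With (a) in hand, Lemma~\ref{WOG-lemma-mandal}(b) becomes applicable and gives, for every $s\geq 1$, the equivalence $I(G)^{(s)}=I(G)^s\Leftrightarrow I(D)^{(s)}=I(D)^s$. Since the right-hand side holds for all $s$ by hypothesis, we conclude that $I(G)^n=I(G)^{(n)}$ for all $n\geq 1$, and the classical bipartite characterization then forces $G$ to be bipartite, establishing (b).

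For the converse, assume (a) and (b). Because of (a), Lemma~\ref{WOG-lemma-mandal}(b) applies again, so $I(D)^n=I(D)^{(n)}$ is equivalent to $I(G)^n=I(G)^{(n)}$ for each $n$; since $G$ is bipartite by (b), the classical result gives $I(G)^n=I(G)^{(n)}$ for all $n$, whence $I(D)^n=I(D)^{(n)}$ for all $n\geq 1$. This direction is also exactly \cite[Corollary~3.8]{Mandal-Pradhan}, so one may alternatively just cite it. The only genuinely delicate point in the whole argument — and the nearest thing to an obstacle — is the logical ordering in the forward direction: Lemma~\ref{WOG-lemma-mandal}(b) requires the sink hypothesis before it can be invoked, so condition (a) must be extracted first, and it is precisely Theorem~\ref{I2=I(2)}, applied at $n=2$, that supplies it. Once (a) is secured, the remainder is a transparent transfer to the graph case.
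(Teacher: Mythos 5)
Your proposal is correct and follows essentially the same route as the paper: extract condition (a) from the case $n=2$ via Theorem~\ref{I2=I(2)}, then use Lemma~\ref{WOG-lemma-mandal}(b) to transfer the equality of powers between $I(D)$ and $I(G)$, and conclude with the bipartite characterization \cite[Theorem~5.9]{ITG} in both directions. Your remark about the logical ordering (that (a) must be secured before the transfer lemma applies) is exactly the structure of the paper's argument.
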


\begin{proof} $\Rightarrow$) By Theorem~\ref{I2=I(2)} condition (a)
holds. Then, by Lemma~\ref{WOG-lemma-mandal}(b),
$I(G)^n=I(G)^{(n)}$ for all $n\geq 1$. Hence, by
\cite[Theorem~5.9]{ITG}, $G$ is bipartite.

$\Leftarrow$) As $G$ is a bipartite graph, by \cite[Theorem~5.9]{ITG}, $I(G)^n=I(G)^{(n)}$ for all
$n\geq 1$. Hence, using that every vertex of $V^+(D)$ is a sink and
applying Lemma~\ref{WOG-lemma-mandal}(b), we get
that $I(D)^n$ is equal to $I(D)^{(n)}$ for all $n\geq 1$. 
\end{proof}

\begin{corollary}\label{ntf-weighted}  
Let $D$ be a weighted oriented graph and let $G$ be its underlying 
graph. If $I(D)$ has no embedded primes, then the following
conditions are equivalent:
\begin{enumerate}
\item[{\rm (a)}] ${\rm Ass}(I(D)^n)={\rm Ass}(I(D))$ for all $n\geq 1$, i.e., $I(D)$ is
normally torsion-free;
\item[{\rm (b)}] $I(D)^{n}=I(D)^{(n)}$ for all $n\geq 1$;
\item[{\rm (c)}] Every vertex in $V^{+}(D)$ is a sink and $G$ is
bipartite.
\end{enumerate}
\end{corollary}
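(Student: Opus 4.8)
The plan is to reduce Corollary~\ref{ntf-weighted} entirely to results already established in the excerpt, since the three conditions are linked by general facts about normally torsion-free ideals together with our two main theorems. I would begin by observing that the equivalence of (a) and (b) is immediate from Proposition~\ref{ntf-char}: that proposition states that for an ideal $I$ with no embedded primes, $I$ is normally torsion-free if and only if $I^n=I^{(n)}$ for all $n\geq 1$. Since we are assuming $I(D)$ has no embedded primes, applying Proposition~\ref{ntf-char} to $I=I(D)$ gives exactly $(\mathrm{a})\Leftrightarrow(\mathrm{b})$ with no further work.

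Next I would establish $(\mathrm{b})\Leftrightarrow(\mathrm{c})$, and here the point is that this is precisely the content of Theorem~\ref{In=I(n)}. That theorem asserts that $I(D)^n=I(D)^{(n)}$ for all $n\geq 1$ if and only if every vertex in $V^+(D)$ is a sink and $G$ is bipartite, which is verbatim the equivalence of (b) and (c). Thus no independent argument is needed; I would simply cite Theorem~\ref{In=I(n)}. It is worth noting that the hypothesis ``$I(D)$ has no embedded primes'' is not actually required for $(\mathrm{b})\Leftrightarrow(\mathrm{c})$, since Theorem~\ref{In=I(n)} holds unconditionally, but it does no harm to keep it as the standing assumption of the corollary.

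Chaining these two equivalences then closes the loop: $(\mathrm{a})\Leftrightarrow(\mathrm{b})$ by Proposition~\ref{ntf-char} and $(\mathrm{b})\Leftrightarrow(\mathrm{c})$ by Theorem~\ref{In=I(n)}, so all three conditions are equivalent. The proof is therefore a short two-step citation argument rather than a computation. The only thing I would double-check is that the standing hypothesis of the corollary (no embedded primes) is exactly the hypothesis demanded by Proposition~\ref{ntf-char}, so that the first equivalence applies cleanly; this matches precisely, so there is no gap to fill.

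I do not anticipate any genuine obstacle here, since all the mathematical substance has been absorbed into the earlier results—in particular Theorem~\ref{In=I(n)} carries the heavy combinatorial and homological content. The corollary is essentially a bookkeeping statement that packages the normally torsion-free reformulation alongside the equality of symbolic and ordinary powers. If I wanted to make the writeup self-contained I might alternatively invoke Corollary~\ref{ntf-char-coro} in place of Proposition~\ref{ntf-char}, but under the no-embedded-primes hypothesis the cleaner route is the direct application of Proposition~\ref{ntf-char}.
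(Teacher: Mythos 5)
Your proposal is correct and matches the paper's own proof exactly: the paper also derives $(\mathrm{a})\Leftrightarrow(\mathrm{b})$ from Proposition~\ref{ntf-char} and $(\mathrm{b})\Leftrightarrow(\mathrm{c})$ from Theorem~\ref{In=I(n)}. Nothing is missing.
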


\begin{proof} By Proposition~\ref{ntf-char}, conditions (a) and (b)
are equivalent and, by Theorem~\ref{In=I(n)}, conditions (b) and (c)
are equivalent. 
\end{proof}

\section{Equality of ordinary and symbolic powers of monomial
ideals}\label{symbolic-section2}

In this section we give necessary conditions for the equality of 
ordinary and symbolic powers of monomial ideals with a minimal
irreducible decomposition. Then, we classify when 
the dual of the edge ideal of a weighted oriented graph is 
normally torsion-free. To avoid repetitions, we continue to employ 
the notations and definitions used in Sections~\ref{intro-section} and
\ref{prelim-section}.

\begin{theorem}\label{dec12-21} Let $I$ be a monomial ideal of $S$
with a minimal irreducible decomposition 
$I=\mathfrak{q}_1\cap\cdots\cap\mathfrak{q}_m$, let $\alpha_i$ 
be the vector in $\mathbb{N}^s\setminus\{0\}$ such that 
$\mathfrak{q}_i=\mathfrak{q}_{\alpha_i}$, and let 
$B$ be the $s\times m$ matrix with column vectors
$\alpha_1^{-1},\ldots,\alpha_m^{-1}$. If $I^{n}=I^{(n)}$ for all
$n\geq 1$, then the following hold:
\begin{enumerate}
\item[(a)] $\overline{I^n}=
\overline{\mathfrak{q}_1^n}\cap\cdots\cap\overline{\mathfrak{q}_m^n}$
for all $n\geq 1$; 
\item[(b)] ${\rm NP}(I)=\mathcal{Q}(B)$, that is, ${\rm NP}(I)={\rm
IP}(I)$; 
\item[(c)] The vertices of $\mathcal{Q}(I)$ are precisely
$\alpha_1^{-1},\ldots,\alpha_m^{-1}$.
\end{enumerate}
\end{theorem}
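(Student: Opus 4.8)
The plan is to extract from the hypothesis the structural fact that $I$ has no embedded primes, then prove (a) by a ``raise to a power'' argument with Lemma~\ref{icd}, deduce (b) from (a) by passing to Newton polyhedra, and finally obtain (c) from (b) by blocking duality together with the antichain structure of the supports of the $\alpha_i$. First I would take $n=1$ in the hypothesis, giving $I=I^{(1)}$; since $I^{(1)}$ is the intersection of the primary components of $I$ at its minimal primes, this equality forces $I$ to have no embedded primes. Hence in the minimal irreducible decomposition $I=\bigcap_{i=1}^m\mathfrak{q}_i$ every component $\mathfrak{q}_i=\mathfrak{q}_{\alpha_i}$ is the primary component at a minimal prime, so by Lemma~\ref{anoth-one-char-spow-general} one has $I^{(n)}=\bigcap_{i=1}^m\mathfrak{q}_i^n$ for all $n\geq 1$, and the hypothesis becomes
$$
I^n=\bigcap_{i=1}^m\mathfrak{q}_i^n\quad\text{for all }n\geq 1,
$$
which is the form I will use throughout.

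For (a), the inclusion $\overline{I^n}\subseteq\bigcap_i\overline{\mathfrak{q}_i^n}$ is immediate from $I^n\subseteq\mathfrak{q}_i^n$ and monotonicity of integral closure. For the reverse inclusion I would take a monomial $t^a\in\bigcap_i\overline{\mathfrak{q}_i^n}$; by Lemma~\ref{icd} there is, for each $i$, an exponent $p_i\geq 1$ with $(t^a)^{p_i}\in\mathfrak{q}_i^{np_i}$. Setting $p=p_1\cdots p_m$ gives $(t^a)^p\in\mathfrak{q}_i^{np}$ for every $i$, hence
$$
(t^a)^p\in\bigcap_{i=1}^m\mathfrak{q}_i^{np}=I^{(np)}=I^{np}=(I^n)^p,
$$
where the middle equalities are the reformulated hypothesis at level $np$. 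A final application of Lemma~\ref{icd} to the ideal $I^n$ then yields $t^a\in\overline{I^n}$. The essential point is that the hypothesis is used not at level $n$ but at the amplified level $np$, which is precisely what lets the separate integral-closure witnesses for the $\mathfrak{q}_i$ be merged into a single witness for $I^n$.

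For (b), I would feed (a) into the description \eqref{jun21-21}. Since $\overline{I^n}=(\{t^a: a/n\in{\rm NP}(I)\})$ and likewise for each $\mathfrak{q}_i$, the equality in (a) says that a lattice point $a$ satisfies $a/n\in{\rm NP}(I)$ if and only if $a/n\in\bigcap_i{\rm NP}(\mathfrak{q}_i)$; letting $n$ and $a$ vary over all values, the two rational polyhedra ${\rm NP}(I)$ and $\bigcap_i{\rm NP}(\mathfrak{q}_i)$ share the same (dense) set of rational points and are therefore equal. A short direct computation shows ${\rm NP}(\mathfrak{q}_{\alpha_i})=\{x\geq 0:\langle x,\alpha_i^{-1}\rangle\geq 1\}$, whence $\bigcap_i{\rm NP}(\mathfrak{q}_i)=\{x\geq 0:\,xB\geq 1\}=\mathcal{Q}(B)={\rm IP}(I)$, giving ${\rm NP}(I)={\rm IP}(I)$.

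Finally, for (c) I would use that $\mathcal{Q}(I)$ is the blocker of ${\rm NP}(I)$: indeed $\mathcal{Q}(I)=\{x\geq 0:\langle x,v_l\rangle\geq 1\ \forall l\}$ while ${\rm NP}(I)=\mathbb{R}^s_++{\rm conv}(v_1,\ldots,v_q)$. By (b), ${\rm NP}(I)=\{x\geq 0:\langle x,\alpha_i^{-1}\rangle\geq 1\ \forall i\}$, so the double-blocker identity underlying Proposition~\ref{np-qa}(a) gives $\mathcal{Q}(I)=\mathbb{R}^s_++{\rm conv}(\alpha_1^{-1},\ldots,\alpha_m^{-1})$; thus every vertex of $\mathcal{Q}(I)$ lies in $\{\alpha_1^{-1},\ldots,\alpha_m^{-1}\}$. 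To see that each $\alpha_i^{-1}$ is genuinely a vertex I would invoke the no-embedded-primes conclusion once more: the minimal primes ${\rm rad}(\mathfrak{q}_i)$ are pairwise incomparable, and as ${\rm rad}(\mathfrak{q}_i)$ is generated by $\{t_j: j\in{\rm supp}(\alpha_i)\}$ the supports form an antichain, so for each $k\neq i$ there is an index in ${\rm supp}(\alpha_k)\setminus{\rm supp}(\alpha_i)$. Taking $c$ with $c_j=N$ for $j\notin{\rm supp}(\alpha_i)$ and $c_j=0$ otherwise, with $N$ large, one gets $\langle c,\alpha_i^{-1}\rangle=0$ while $\langle c,\alpha_k^{-1}\rangle\geq 1$ for all $k\neq i$; hence $\alpha_i^{-1}\notin\mathbb{R}^s_++{\rm conv}(\{\alpha_k^{-1}:k\neq i\})$, so $\alpha_i^{-1}$ is a vertex, and distinct components $\mathfrak{q}_i$ give distinct $\alpha_i^{-1}$. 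I expect the genuine difficulty to be concentrated in (c): the blocking-duality bookkeeping and, above all, the argument that each $\alpha_i^{-1}$ is an actual vertex rather than a redundant generator. This is exactly where the full hypothesis is indispensable — through the no-embedded-primes consequence it yields the antichain of supports, without which a component could contribute a non-facet and the separating functional $c$ would fail to exist.
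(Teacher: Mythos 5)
Your argument is correct, and parts (a) and (b) follow the paper's route: part (a) is the same ``amplify to level $np$'' argument (your $p=p_1\cdots p_m$ plays the role of the paper's least common multiple), and for part (b) the paper simply cites \cite[Theorem~7.6]{intclos}, whose content is essentially the rational-point/Newton-polyhedron computation you carry out by hand, including the identity ${\rm NP}(\mathfrak{q}_{\alpha_i})=\{x\geq 0\mid \langle x,\alpha_i^{-1}\rangle\geq 1\}$. Part (c) is where you genuinely diverge. The paper splits (c) into two halves: it quotes \cite[Theorem~7.1]{intclos} (using that $I=I^{(1)}$ forces no embedded primes) to assert that each $\alpha_i^{-1}$ is a vertex of $\mathcal{Q}(I)$ and that Eq.~\eqref{dec13-21} is an irredundant facet description of ${\rm NP}(I)$; then, for the converse inclusion, it takes an arbitrary vertex $\beta$ of $\mathcal{Q}(I)$, realizes it as a basic feasible solution, shows that $H(\beta,1)\cap{\rm NP}(I)$ is a facet of ${\rm NP}(I)$ by exhibiting the $s-1$ affinely independent points $v_{j_1}+e_{k_1},\ldots,v_{j_1}+e_{k_t},v_{j_2},\ldots,v_{j_\ell}$, and matches facets against Eq.~\eqref{dec13-21} to conclude $\beta=\alpha_i^{-1}$. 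You instead apply Fulkerson's blocking duality $B(B(P))=P$ to the identity of part (b) to get $\mathcal{Q}(I)=\mathbb{R}_+^s+{\rm conv}(\alpha_1^{-1},\ldots,\alpha_m^{-1})$ at once, which handles the ``only these'' direction without any facet bookkeeping, and then prove the ``each one really is a vertex'' direction elementarily via the antichain of supports and an explicit separating functional --- precisely the half the paper outsources to \cite[Theorem~7.1]{intclos}. Both routes are sound; yours is more self-contained on the vertex-existence side but leans on the double-blocker theorem (which is indeed behind Proposition~\ref{np-qa}(a) and is available in \cite{reesclu,monalg-rev}, though you should cite it explicitly rather than gesture at it), whereas the paper's facet argument avoids invoking blocking duality at the cost of the longer linear-algebra verification.
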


\begin{proof} (a) The inclusion ``$\subset$'' is clear. To show the
other inclusion take
$t^a\in\bigcap_{i=1}^m\overline{\mathfrak{q}_i^n}$. Hence, by the
description of the integral closure given in Lemma~\ref{icd}, 
for each $1\leq i\leq m$ 
there is $p_i\in\mathbb{N}_+$ such that
$(t^a)^{p_i}\in\mathfrak{q}_i^{np_i}$. Let $p$ be the least common
multiple of $p_1,\ldots,p_m$. Then, for each $i$ we can write
$p=k_ip_i$ for some $k_i\in\mathbb{N}_+$, and consequently
$$ 
(t^a)^p=((t^a)^{p_i})^{k_i}\in(\mathfrak{q}_i^{np_i})^{k_i}=
\mathfrak{q}_i^{np_ik_i}=\mathfrak{q}_i^{np}.
$$
\quad By Lemma~\ref{anoth-one-char-spow-general}, $I$ has no 
embedded primes because in particular we are
assuming $I^n=I^{(n)}$ for 
$n=1$. Then, also by Lemma~\ref{anoth-one-char-spow-general}, 
$(t^a)^p\in I^{(np)}=I^{np}=(I^n)^p$, and we get
$t^a\in\overline{I^n}$.

(b) By part (a) and \cite[Theorem~7.6]{intclos}, we get that ${\rm
NP}(I)={\rm IP}(I)$. 

(c) Since $I$ has no 
embedded primes, by \cite[Theorem~7.1]{intclos} and part (b), 
$\alpha_1^{-1},\ldots,\alpha_m^{-1}$ are vertices of $\mathcal{Q}(I)$
and we have
\begin{align}\label{dec13-21}
&{\rm NP}(I)=\mathcal{Q}(B)=\textstyle
H^+(\alpha_1^{-1},1)\cap\cdots\cap H^+(\alpha_m^{-1},1)\cap
H_{e_1}^+\cap\cdots\cap H_{e_s}^+,
\end{align}
where $H^+(\alpha_i^{-1},1)=\{x\mid \langle x,
\alpha_i^{-1}\rangle\geq 1\}$ and $H_{e_i}^+=\{x\mid \langle x,
e_i\rangle\geq 0\}$ are closed halfspaces, $e_i$ is the $i$-th unit
vector in $\mathbb{R}^s$, and $\langle\
,\, \rangle$ is the standard inner
product on $\mathbb{R}^s$. Let $\mathcal{G}(I)=\{t^{v_1},\ldots,t^{v_q}\}$ be the minimal generating
set of $I$ and let $\beta$ be any vertex of $\mathcal{Q}(I)$. By
\cite[Corollary~1.1.49]{monalg-rev}, $\beta$ is a basic feasible
solution---in the sense of \cite[Definition~1.1.48]{monalg-rev}---for the
system 
$$
x\geq 0,\ \langle x, v_i\rangle\geq 1,\ 
i=1,\ldots,q
$$
of linear constraints that represent $\mathcal{Q}(I)$. Thus, there
are $v_{j_1},\ldots,v_{j_\ell}, e_{k_1},\ldots,e_{k_t}$ 
linearly independent vectors such that the $v_{j_i}$'s are in
$\{v_1,\ldots,v_q\}$, the $e_{k_i}$'s are in
$\{e_1,\ldots,e_s\}$, $s=\ell+t$, and $\beta$ satisfies the following linear constraints
\begin{align}
&\langle x,v_{j_i}\rangle=1,\
\langle x,e_{k_p}\rangle=0\mbox{ for all }i,p,\label{1st-eq}\\
&\langle x,v_i\rangle\geq 1\mbox{ for all }i,\mbox{ and }
x\geq 0.\label{2nd-eq}
\end{align}
\quad Let $H(\beta,1)$ be the hyperplane $\{x\mid \langle x,
\beta\rangle=1\}$. Setting $F_\beta:=H(\beta,1)\cap{\rm NP}(I)$ we claim that 
$F_\beta$ is a facet of ${\rm NP}(I)$ in the sense of
\cite[Definition~1.1.8]{monalg-rev}, that is, $F_\beta$ is a face of
${\rm NP}(I)$ of dimension $s-1$. By Eq.~\eqref{2nd-eq}, one has ${\rm
NP}(I)\subset H^+(\beta,1)$, and $F_\beta\neq\emptyset$ because, by
Eq.~\eqref{1st-eq},   
at least one of the $v_{j_i}$'s belongs to $F_\beta$. Thus, $H(\beta,1)$ is a
supporting hyperplane of ${\rm NP}(I)$. Hence, to prove the claim, it suffices to notice that
the set 
$$
\mathcal{B}=\{v_{j_1}+e_{k_1},\ldots,v_{j_1}+e_{k_t},v_{j_2},\ldots,v_{j_\ell}\}
$$
is linearly independent, $\mathcal{B}\subset F_\beta$, and
$|\mathcal{B}|=s-1$. In particular $H(\alpha_i^{-1},1)\cap{\rm NP}(I)$ is a facet of
${\rm NP}(I)$ for each $i$ since $\alpha_i^{-1}$ is a vertex of
$\mathcal{Q}(I)$ for each $i$. Using Eq.~\eqref{dec13-21} together with
\cite[Theorem~3.2.1]{webster}, we get that either 
$F_\beta=H(\alpha_i^{-1},1)\cap{\rm NP}(I)$ for some $1\leq i\leq m$ or
$F_\beta=H_{e_i}\cap{\rm NP}(I)$ for some $1\leq i\leq s$.

Case (I) Assume that $F_\beta=H(\alpha_i^{-1},1)\cap{\rm NP}(I)$ for
some $1\leq i\leq m$. As $\mathcal{B}\subset F_\beta$, $\alpha_i^{-1}$
satisfies the system of linear equations
\begin{align}
&\langle v_{j_1}+e_{k_1},x\rangle=1,\ldots,\langle
v_{j_1}+e_{k_t},x\rangle=1,\nonumber\\
&\langle v_{j_2},x\rangle=1,\ldots,\langle
v_{j_\ell},x\rangle=1,\nonumber
\end{align}
and also satisfies $\langle
v_{j_1},x\rangle=1$ because $v_{j_1}\in F_\beta$ by
Eq.~\eqref{1st-eq}. It follows that $\alpha_i^{-1}$ satisfies the linear system of
Eq.~\eqref{1st-eq} and since this system has $\beta$ as its unique solution, 
we get $\beta=\alpha_i^{-1}$.

Case (II) Assume that $F_\beta=H_{e_i}\cap{\rm NP}(I)$ for some
$1\leq i\leq s$. As $\mathcal{B}\subset F_\beta$, $e_i$
satisfies the system of linear equations
\begin{align}
&\langle v_{j_1}+e_{k_1},x\rangle=0,\ldots,\langle
v_{j_1}+e_{k_t},x\rangle=0,\nonumber\\
&\langle v_{j_2},x\rangle=0,\ldots,\langle
v_{j_\ell},x\rangle=0,\nonumber
\end{align}
and also satisfies $\langle
v_{j_1},x\rangle=0$ because $v_{j_1}\in F_\beta$ by
Eq.~\eqref{1st-eq}. It follows that $e_i$ must be $0$, a
contradiction. Thus, this case cannot occur.
\end{proof}

\begin{corollary}\label{ntf-J} 
Let $J(D)$ be the dual of the edge ideal $I(D)$  of
a weighted oriented graph $D$. Then, $J(D)^n=J(D)^{(n)}$ for all $n\geq 1$ if 
and only if $J(D)$ is a normal ideal and ${\rm NP}(J(D))={\rm IP}(J(D))$.
\end{corollary}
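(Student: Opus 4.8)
The plan is to exploit the fact that the given decomposition $J(D)=\bigcap_{(t_i,t_j)\in E(D)}(t_i,t_j^{w_j})$ is already a \emph{minimal} irreducible decomposition. Since $G$ is simple, distinct edges of $D$ produce distinct radicals $(t_i,t_j)$, and no two of these primes are comparable, so all the components are minimal primes and $J(D)$ has no embedded primes. Writing $\mathfrak{q}_1,\dots,\mathfrak{q}_m$ for these components, Lemma~\ref{anoth-one-char-spow-general} then gives $J(D)^{(n)}=\bigcap_{k=1}^m\mathfrak{q}_k^{\,n}$ for all $n$, and Theorem~\ref{dec12-21} becomes applicable to $J(D)$ whenever $J(D)^n=J(D)^{(n)}$ holds for all $n$.

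The crucial local observation is that each component $\mathfrak{q}_k=(t_i,t_j^{w_j})$ is a \emph{normal} ideal. I would prove this directly from the generators: by \eqref{jun21-21} a monomial $t^\gamma$ lies in $\overline{\mathfrak{q}_k^{\,n}}$ iff $\gamma_i+\gamma_j/w_j\ge n$, and because $t_i$ occurs to the first power one may take $d=\min(\gamma_i,n)$ copies of the generator $t_i$ together with $n-d$ copies of $t_j^{w_j}$ to exhibit $t^\gamma\in\mathfrak{q}_k^{\,n}$; hence $\overline{\mathfrak{q}_k^{\,n}}=\mathfrak{q}_k^{\,n}$ for all $n$. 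I would also record the polyhedral dictionary: writing $\mathfrak{q}_k=\mathfrak{q}_{\alpha_k}$ with $\alpha_k=e_i+w_je_j$, Proposition~\ref{np-qa}(a) gives ${\rm NP}(\mathfrak{q}_k)=\{x\ge0:\langle x,\alpha_k^{-1}\rangle\ge1\}$, so intersecting over $k$ yields $\bigcap_k{\rm NP}(\mathfrak{q}_k)=\mathcal{Q}(B)={\rm IP}(J(D))$. Combined with \eqref{jun21-21}, this shows that the polyhedral identity ${\rm NP}(J(D))={\rm IP}(J(D))$ is equivalent to the family of ideal identities $\overline{J(D)^n}=\bigcap_k\overline{\mathfrak{q}_k^{\,n}}$, one for each $n$.

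With these two ingredients the biconditional splits cleanly. For the forward implication, assuming $J(D)^n=J(D)^{(n)}$ for all $n$, Theorem~\ref{dec12-21}(b) gives ${\rm NP}(J(D))={\rm IP}(J(D))$ at once, while Theorem~\ref{dec12-21}(a) gives $\overline{J(D)^n}=\bigcap_k\overline{\mathfrak{q}_k^{\,n}}$, which by normality of the $\mathfrak{q}_k$ equals $\bigcap_k\mathfrak{q}_k^{\,n}=J(D)^{(n)}=J(D)^n$, so $J(D)$ is normal. For the converse, assuming $J(D)$ normal and ${\rm NP}(J(D))={\rm IP}(J(D))$, the dictionary above turns the polyhedral equality into $\overline{J(D)^n}=\bigcap_k\overline{\mathfrak{q}_k^{\,n}}$; normality of the components replaces the right side by $\bigcap_k\mathfrak{q}_k^{\,n}=J(D)^{(n)}$, and normality of $J(D)$ replaces the left side by $J(D)^n$, yielding $J(D)^n=J(D)^{(n)}$. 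The step requiring the most care is not any single inequality but the bridge of the second paragraph: one must check that the integral-closure description \eqref{jun21-21} transports the equality of the two rational polyhedra ${\rm NP}(J(D))$ and ${\rm IP}(J(D))$ into equality of monomial ideals \emph{simultaneously for all} $n$ (this is where the converse cannot invoke Theorem~\ref{dec12-21}, whose hypothesis is exactly what is being proved), and that normality is verified for each component $\mathfrak{q}_k$ individually rather than merely for the ambient ideal $J(D)$.
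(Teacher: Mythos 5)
Your proof is correct and follows essentially the same route as the paper: the forward direction applies Theorem~\ref{dec12-21}(a),(b) together with normality of the components $(t_i,t_j^{w_j})$, and the converse uses the polyhedral identity to get $\overline{J(D)^n}=\bigcap_k\overline{\mathfrak{q}_k^{\,n}}=\bigcap_k\mathfrak{q}_k^{\,n}=J(D)^{(n)}$ and then normality of $J(D)$. The only difference is cosmetic: where the paper cites external results for the normality of each component and for the equivalence between ${\rm NP}={\rm IP}$ and the family of integral-closure identities, you supply short direct arguments, both of which check out.
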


\begin{proof} $\Rightarrow$) The irreducible components
$I_1,\ldots,I_p$ of $J(D)$ are of the form $(t_i,t_j^{w_j})$ with
$(t_i,t_j)$ an edge of $E(D)$. Hence, the ideal $I_i$ is normal for each $i$
\cite[Proposition 7.5]{intclos}. Then, using Theorem~\ref{dec12-21}, we
get that ${\rm NP}(J(D))={\rm IP}(J(D))$ and 
$$
\overline{J(D)^{n}}=\textstyle\bigcap_{i=1}^p\overline{I_i^n}=
\bigcap_{i=1}^p{I_i^n}=J(D)^{(n)}=J(D)^n
$$
for all $n\geq 1$, that is, $J(D)$ is normal.

$\Leftarrow$) As ${\rm NP}(J(D))={\rm IP}(J(D))$, by
\cite[Theorem~7.6]{intclos} and Lemma~\ref{anoth-one-char-spow-general}, we obtain  
$$\overline{J(D)^{n}}=\textstyle\bigcap_{i=1}^p\overline{I_i^n}=
\bigcap_{i=1}^p{I_i^n}=J(D)^{(n)}
$$
for all $n\geq 1$. Hence, as $J(D)$ is normal, we obtain 
that $J(D)^n=J(D)^{(n)}$ for all $n\geq 1$. 
\end{proof}

\section{Examples}\label{examples-section}
\begin{example}\label{ntf-example}
Let $D$ be the weighted oriented graph whose edge ideal is 
$$
I(D)=(t_1t_2^2,\
t_3t_2^2,\, t_3t_4^2,\, t_1t_4^2)=(t_1,\,
t_3)\textstyle\cap(t_2^2,\, t_4^2).
$$
\quad The underlying graph $G$ of $D$ is bipartite, 
$V^+(D)=\{t_2,\, t_4\}$ and all vertices of $V^+(D)$ are sinks. Then,
$I(D)^{n}=I(D)^{\langle n\rangle}=I(D)^{(n)}$ for all $n\geq 1$, 
see \cite[Corollary 3.8]{Mandal-Pradhan} and Theorem~\ref{In=I(n)}. 
Let $\overline{I(D)}$ be the integral closure of $I(D)$. By
Lemma~\ref{icd}, $g=t_1t_2t_4\in \overline{I(D)}\setminus I(D)$ because 
$$
f^2=(t_1t_2t_4)^2=(t_1t_2^2)(t_1t_4^2)\in I(D)^2.
$$
\quad Thus, $I(G)$ is integrally closed but $I(D)$ is not, i.e., 
being integrally closed is not preserved by making the change of 
variables $t_i\rightarrow t_i^{w_i}$ for all $i$ in the edge ideal
$I(G)$. The vertices of $\mathcal{Q}(I(D))$ are $(1,0,1,0)$ and
$(0,1/2,0,1/2)$ and they correspond to the irreducible components of
$I(D)$ by Theorem~\ref{dec12-21}. The dual of $I(D)$ is
given by 
$$
J(D)=(t_1,t_2^2)\cap(t_3,t_2^2)\cap(t_3,t_4^2)\cap(t_1,t_4^2)=(t_1t_3,\,
t_2^2t_4^2).
$$
\quad Using \textit{Normaliz} \cite{normaliz2} it follows that $J(D)$
is normal and 
${\rm NP}(J(D))={\rm IP}(J(D))$. Hence, by Corollary~\ref{ntf-J},
$J(D)^n=J(D)^{(n)}$ 
for all $n\geq 1$. The vertices of $\mathcal{Q}(J(D))$ are 
$$
(1,1/2,0,0),\, (0,1/2,1,0),\, (0,0,1,1/2),\, (1,0,0,1/2),
$$
and they correspond to the irreducible components of
$J(D)$ by Theorem~\ref{dec12-21}.
\end{example}

\begin{example}\label{ntf-m}
Let $D$ be the weighted oriented graph whose edge ideal is 
$$
I(D)=(t_1t_2^2,\
t_2t_3^2,\, t_3t_1^2)=(t_1^2,\,
t_2)\textstyle\cap(t_1,\, t_3^2)\textstyle\cap(t_2^2,\, t_3)\textstyle\cap(t_1^2,\,
t_2^2,\, t_3^2).
$$
\quad The underlying graph $G$ of $D$ is a triangle and $V^+(D)=\{t_1,\,
t_2,\, t_3\}$. Then,
$I(D)^{\langle n\rangle}=I(D)^n$ for all $n\geq 1$ (see
Lemma~\ref{powers-m}) and $I(D)^{(1)}\neq I(D)$. Note that
$\mathfrak{m}=(t_1,\,t_2,\,t_3)\in{\rm Ass}(I(D))$. This follows from
the above irreducible decomposition of $I(D)$ or directly using Proposition~\ref{dec5-21}.
\end{example}

\begin{example}\label{triangle}
Let $D$ be the weighted oriented graph whose edge ideal is 
$$
I(D)=(t_1t_2^2,\
t_2t_3,\, t_1t_3)=(t_1,\,
t_2)\textstyle\cap(t_1,\, t_3)\textstyle\cap(t_2^2,\, t_3).
$$
\quad The underlying graph $G$ of $D$ is a triangle, $V^+(D)=\{t_2\}$,
and $t_1t_2^2t_3\in I(D)^{(2)}\setminus I(D)^2$.
\end{example}

\begin{example}\label{triangle1}
Let $D$ be the weighted oriented graph whose edge ideal is 
$$
I(D)=(t_3t_1^2,\
t_2t_1^2,\, t_2t_3)=(t_1^2,\,
t_2)\textstyle\cap(t_1^2,\, t_3)\textstyle\cap(t_2,\, t_3).
$$
\quad The underlying graph $G$ of $D$ is a triangle, 
$V^+(D)=\{t_1\}$, and $t_1^2t_2t_3\in I(D)^{(2)}\setminus I(D)^2$.
\end{example}

\begin{example}\label{star}
Let $D$ be the weighted oriented graph whose edge ideal is 
$$
I(D)=(t_1t_2^2,\, t_2t_3)=(t_2)\textstyle\cap(t_2^2,\,
t_3)\textstyle\cap(t_1,\, t_3).
$$
\quad The localizations at the maximal associated primes of $I(D)$ are
$$
S\textstyle\cap(I(D)S_{(t_2,\, t_3)})=(t_2^2,\, t_2t_3)=
(t_2)\textstyle\cap(t_3,\,
t_2^2)\mbox{ and }S\textstyle\cap(I(D)S_{(t_1,\, t_3)})=(t_1,\,
t_3)
$$
and, by Proposition~\ref{nov6-21}, $I(D)^{\langle n\rangle}=(t_2^2,\,
t_2t_3)^n\cap(t_1,\,
t_3)^n$. The underlying graph $G$ of $D$ is a star, 
$V^+(D)=\{t_2\}$, $t_1t_2^2t_3\in I(D)^{(2)}\setminus I(D)^2$, and
one has the equality $I(D)^{\langle n\rangle}=I(D)^n$ for all $n\geq 1$
because $G$ is a star \cite[Theorem~4.12]{Mandal-Pradhan1}. 
\end{example}

\begin{appendix}

\section{Procedures}\label{Appendix}

\begin{procedure}\label{procedure1}
Computing the symbolic powers of a monomial ideal and its irreducible
decomposition using \textit{Macaulay}$2$ \cite{mac2}. This procedure corresponds to
Example~\ref{ntf-m}. One can compute other examples by changing the
polynomial ring $S$ and the generators of the ideal $I$.
\begin{verbatim}
restart
load "SymbolicPowers.m2"
S=QQ[t1,t2,t3,t4,t5]
--Computes I^{(n)} for any monomial ideal I
SPM=(I,k)->intersect(for n from 0 to #minimalPrimes(I)-1 
list localize(I^k,(minimalPrimes(I^k))#n))
I=monomialIdeal(t1*t2^2,t2*t3^2,t3*t1^2)
--Computes the associated primes of an ideal I
ass I
irreducibleDecomposition(I)
n=2
--Computes I^{<n>} using Ass(I)
symbolicPower(I,n)
--Computes I^{(n)} using MinAss(I)
symbolicPower(I,n,UseMinimalPrimes=>true)
--Checks whether or not equality holds
symbolicPower(I,n)==I^n
--Checks whether or not equality holds
symbolicPower(I,n,UseMinimalPrimes=>true)==I^n
mingens(SPM(I,n)/I^n)
localize(I,ideal(t2,t3))
localize(I,ideal(t1,t3))
--checks that powers commute with localization for 
--monomial ideals
(localize(I,ideal(t2,t3)))^n==localize(I^n,ideal(t2,t3))
\end{verbatim}
\end{procedure}

\begin{procedure}\label{procedure2}
Computing the vertices of the covering polyhedron $\mathcal{Q}(I)$ of a monomial ideal
$I$ using \textit{Normaliz} \cite{normaliz2}. This procedure corresponds to
Example~\ref{ntf-m}. One can compute other examples by changing the
linear constraints that define $\mathcal{Q}(I)$.
\begin{verbatim}
amb_space 4
constraints 8
0 1 0 0 >= 0
1 0 0 0 >= 0
0 0 1 0 >= 0
0 0 0 1 >= 0
1 2 0 0 >= 1
0 2 1 0 >= 1
0 0 1 2 >= 1
1 0 0 2 >= 1
SupportHyperplanes
ExtremeRays
VerticesOfPolyhedron
\end{verbatim}
\end{procedure}

\end{appendix}

\section*{Acknowledgments} 
We thank the referee for a careful reading of the paper and for the 
improvements suggested. 
We used \textit{Macaulay}$2$ \cite{mac2} 
to compute symbolic powers of monomial ideals and \textit{Normaliz}
\cite{normaliz2} to compute the vertices of covering polyhedra.

\bibliographystyle{plain}

\end{document}